\DeclareFontFamily{OT2}{cmr}{\hyphenchar\font45}
\DeclareFontShape{OT2}{cmr}{n}{l}{%
<5><6><7><8><9>gen*wncyr%
<10><10.95><12><14.4><17.28><20.74><24.88>wncyr10}{}
\DeclareMathAlphabet{\mathcyr}{OT2}{cmr}{n}{l}
\newtheorem{thm}{Theorem}[section]
\newtheorem*{thm*}{Theorem}
\newtheorem{lem}[thm]{Lemma}
\theoremstyle{definition}
\newtheorem{defn}[thm]{Definition}
\newtheorem{ex}[thm]{Example}
\theoremstyle{remark}
\newtheorem{rem}[thm]{Remark}
\begin{document}

\title{Multiple zeta-star values for indices of infinite length}

\author{Minoru Hirose}
\address[Minoru Hirose]{Institute for Advanced Research, Nagoya University,  Furo-cho, Chikusa-ku, Nagoya, 464-8602, Japan}
\email{minoru.hirose@math.nagoya-u.ac.jp}

\author{Hideki Murahara}
\address[Hideki Murahara]{The University of Kitakyushu,  4-2-1 Kitagata, Kokuraminami-ku, Kitakyushu, Fukuoka, 802-8577, Japan}
\email{hmurahara@mathformula.page}

\author{Tomokazu Onozuka}
\address[Tomokazu Onozuka]{Institute of Mathematics for Industry, Kyushu University 744, Motooka, Nishi-ku, Fukuoka, 819-0395, Japan} \email{t-onozuka@imi.kyushu-u.ac.jp}

\keywords{Multiple zeta(-star) values; Indices of infinite length}
\subjclass[2020]{Primary 11M32}

\begin{abstract}
In this paper, we consider infinite-length versions of multiple zeta-star values. 
We give several explicit formulas for the infinite-length versions of multiple zeta-star values. 
We also discuss the analytic properties of the map from indices to the infinite-length versions of multiple zeta-star values.
\end{abstract}

\maketitle

\section{Main result}
The multiple zeta-star value is the convergent series 
\[
 \zeta^\star (k_1,\dots,k_r)
 =\sum_{n_1\ge \cdots \ge n_r\ge1} \frac{1}{n_1^{k_1}\cdots n_r^{k_r}}
\]
for $(k_1,\dots,k_r)\in\mathbb{Z}_{\ge1}^r$ with $k_1\ge2$ and has been studied variously, along with the multiple zeta values.
In this paper, we consider the infinite length version of the multiple zeta-star values.
\begin{defn}
For $(k_1,k_2,\dots)\in\mathbb{Z}_{\ge1}^{\infty}$ with $k_1\ge2$, we define multiple zeta-star values for indices of infinite length by
\begin{align*}
 \zeta^\star (k_1, k_2,\dots)
 &=\sum_{m_{1}\geq m_{2}\geq\cdots\geq1}\frac{1}{m_{1}^{k_{1}}m_{2}^{k_{2}}\cdots} \qquad( =\lim_{r\rightarrow\infty} \zeta^\star (k_1,\dots,k_r) ),
\end{align*} 
where the summation is over the all decreasing sequence $(m_{j})_{j=1}^{\infty}$ of positive integers
such that $\lim_{r\to\infty}m_{r}=1$.
\end{defn}
We will see later that the above sum converges except for the case where $k_1=2$ and $k_j=1$ for all $j>1$ (see Section \ref{order_property}).
First, we will show some formulas for the multiple zeta-star values for indices of infinite length.
Let $\{k\}^r$ denote the $r$ times repetition of the $k$, e.g., $(\{k\}^3)=(k,k,k)$.
We use the summation symbol $\sum'$ in an extended meaning of $\sum$, i.e.,
$\sum'{\!}_{j=a}^{b-1}$ means $-\sum_{j=b}^{a-1}$ if $b<a$, 0 if $b=a$, $\sum_{j=a}^{b-1}$ if $b>a$.
\begin{thm} \label{special_values}
We have the following equalities:
\begin{enumerate}
 \item For $k_1,\dots,k_r\in\mathbb{Z}_{\ge1}^r$ with $k_1\ge2$,
  \[
   \zeta^{\star}(k_{1},\dots,k_{r-1},k_{r}+1,\{1\}^{\infty})
   =\zeta^{\star}(k_{1},\dots,k_{r}).
  \]
 \item For $k_1,\dots,k_r\in\mathbb{Z}_{\ge1}^r$ with $k_1\ge2$,
 \begin{align*}
  &\zeta^{\star}(k_{1},\dots,k_{r},\{2\}^{\infty})\\
  &=(-1)^{k_{1}+\cdots+k_{r}}\left(2-2\sum_{s=1}^{r}\sideset{}{'}{\sum}_{j=2}^{k_{s}-1}
  (-1)^{k_{1}+\cdots+k_{s-1}+j}\zeta^{\star}(k_{1},\dots,k_{s-1},j)\right).
 \end{align*} 
 \item For $k\ge 2$,
  \[
  \zeta^{\star}(\{k\}^{\infty})=\prod_{m=2}^{\infty}\left(\frac{m^{k}}{m^{k}-1}\right) = \prod_{c^k=1}\Gamma(2-c).
  \]
 \item For $n\ge 2$,
  \[
   \zeta^{\star}(\{2,\{1\}^{n-2}\}^{\infty})=n.
  \]
 \item For $n\ge 1$,
  \[
    \zeta^{\star}(\{\{2\}^{n},1\}^{\infty})=2\prod_{c^{2n+1}=1}\frac{\Gamma(2-c)}{\Gamma(2+c)}.
  \]  
 \item For $n\ge 0$,
  \begin{align*}
   \zeta^{\star}(\{\{2\}^{n},3,\{2\}^{n},1\}^{\infty})
   =2\prod_{s\in\{\pm1\}}\prod_{c^{2n+2}=s}\Gamma(2-c)^{-s}\Gamma\left(1-\frac{c}{2}\right)^{2s}.      
  \end{align*}
 \end{enumerate}
\end{thm}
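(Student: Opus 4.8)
The plan is to use the iterated‑integral description of $\zeta^{\star}$, to recognise the infinite periodic index as encoding a generalised hypergeometric differential equation, and to read off the $\Gamma$‑product as a connection coefficient of that equation. Recall that for $k_{1}\ge 2$,
\[
 \zeta^{\star}(k_{1},\dots,k_{r})=\int_{0}^{1}\omega_{0}^{k_{1}-1}\omega_{\star}\,\omega_{0}^{k_{2}-1}\omega_{\star}\cdots\omega_{\star}\,\omega_{0}^{k_{r}-1}\omega_{1},
\]
where $\omega_{0}=\tfrac{dt}{t}$, $\omega_{1}=\tfrac{dt}{1-t}$ and $\omega_{\star}:=\omega_{0}+\omega_{1}=\tfrac{dt}{t(1-t)}$ (this is just the statement that $\zeta^{\star}$ is the sum of the $\zeta$'s obtained by merging adjacent parts). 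Since by definition $\zeta^{\star}(\{\{2\}^{n},3,\{2\}^{n},1\}^{\infty})=\lim_{r}\zeta^{\star}(\{\{2\}^{n},3,\{2\}^{n},1\}^{r})$, I would evaluate this limit through the ``transfer'' of the cyclic word of one period.

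Set $N:=4n+4$ (the weight of the period) and let $W=(\omega_{0}\omega_{\star})^{n}\omega_{0}^{2}\omega_{\star}(\omega_{0}\omega_{\star})^{n}\omega_{\star}$ be the length‑$N$ word attached to the period $(\{2\}^{n},3,\{2\}^{n},1)$. For $0\le j<N$ let $H_{j}(t)$ be the tail iterated integral of the infinite word $W^{\infty}$ starting from its $(j{+}1)$‑st letter; periodicity of $W^{\infty}$ gives the closed first‑order system $H_{j}'=\beta_{j}H_{j+1}$ (indices modulo $N$), with $\beta_{j}\in\{\,1/t,\ 1/(t(1-t))\,\}$ according as the $(j{+}1)$‑st letter is $\omega_{0}$ or $\omega_{\star}$. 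Eliminating, $H_{0}$ satisfies an order‑$N$ Fuchsian equation on $\mathbb{P}^{1}\setminus\{0,1,\infty\}$ whose leading coefficient is $t^{N}(1-t)^{2n+2}$, hence it is of generalised hypergeometric type, and $\zeta^{\star}(\{\{2\}^{n},3,\{2\}^{n},1\}^{\infty})$ is the value at $t=1$ of $H_{0}$, the solution that is holomorphic with local exponent $1$ at $t=0$.

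The heart of the argument is the local data of this equation. Near $t=0$ every $\gamma_{j}:=1/\beta_{j}$ vanishes simply, so the operator is $(t\tfrac{d}{dt})^{N}-1+O(t)$ and the exponents at $0$ are exactly the $N$‑th roots of unity, i.e.\ $\{c:c^{2n+2}=\pm1\}$ --- this accounts for the factors $\Gamma(2-c)^{\mp1}$. At $t=1$ and $t=\infty$ the relevant feature is that precisely $2n+2=N/2$ of the letters of $W$ are $\omega_{\star}$: the sub‑patterns indexed by $c^{2n+2}=+1$ and $c^{2n+2}=-1$ decouple (producing the product over $s\in\{\pm1\}$ and the overall constant $2$), and the ``half‑weight'' is responsible for the arguments $1-\tfrac{c}{2}$. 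Carrying out the generalised Gauss summation for ${}_{p}F_{p-1}$ at $1$ --- in its limiting form, since several exponent differences are here integers --- and tidying up with the reflection and duplication formulas for $\Gamma$ should yield the asserted identity. The same machine, run on the periods $(k)$ and $(\{2\}^{n},1)$, reproduces parts (3) and (5); part (3) is the transparent case, where the equation is $(t\tfrac{d}{dt})^{k}-1=0$ (shifted by one at $0$) with integer exponents $0,1,\dots,k-1$ at $1$, and the value at $1$ collapses to $\prod_{c^{k}=1}\Gamma(2-c)$.

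The main obstacle is twofold. Analytically, one must prove that $\lim_{r}\zeta^{\star}(\{\{2\}^{n},3,\{2\}^{n},1\}^{r})$ is genuinely \emph{this} connection coefficient, rather than a different eigenvalue of the period monodromy or $+\infty$; this is exactly where the hypothesis is used --- for a genuinely periodic index of weight $N\ge2$ one has $\sum_{c^{N}=1}c=0$, which is what makes the relevant infinite products converge and what fails for the excluded index $(2,\{1\}^{\infty})$. Computationally, the explicit evaluation of the ${}_{p}F_{p-1}(1)$ is the real work: the ``$3$'' and the single ``$1$'' spoil the alternating pattern of $W$, so the indicial data at $1$ and $\infty$ is genuinely resonant, the plain Gauss formula does not apply, and keeping track of which $\Gamma$‑factors survive the cancellations is where all the effort lies. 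An alternative would be to deduce part (6) from parts (1)--(5) via linear relations among multiple zeta‑star values together with an induction on $n$, but that seems to require an additional identity relating the period $(\{2\}^{n},3,\{2\}^{n},1)$ to the periods handled in (3) and (5).
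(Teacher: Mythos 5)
Your proposal is a program, not a proof: the two steps you yourself flag as ``the main obstacle'' are exactly the substance of the theorem, and neither is carried out. First, identifying $\lim_{r}\zeta^{\star}(\{\{2\}^{n},3,\{2\}^{n},1\}^{r})$ with a specific connection coefficient of your order-$N$ Fuchsian operator is itself delicate: each tail function $H_{j}$ diverges at $t=1$ (the form $\omega_{\star}$ has a pole there), so one must prove both that the limit defining the infinite-index value converges and that it picks out the asserted coefficient rather than some other combination of solutions. Second, even granting that, the evaluation is left at ``generalised Gauss summation \dots should yield the asserted identity,'' while you note in the same breath that the exponent data at $1$ and $\infty$ is resonant and the plain Gauss formula does not apply; in resonant situations connection coefficients can acquire logarithmic/derivative contributions, so the claim that a clean $\Gamma$-product of the stated shape emerges is unsupported. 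Moreover, the theorem has six parts: your periodic-transfer machine could plausibly cover (3), (4), (5), (6), but parts (1) and (2) concern an arbitrary finite prefix followed by a tail $\{1\}^{\infty}$ or $\{2\}^{\infty}$, i.e.\ non-periodic indices, and the proposal says nothing about these (part (4) is also not mentioned).

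For comparison, the paper's route is elementary and explicit, which is why it closes. Lemma \ref{lem:linf} evaluates any $\zeta^{\star}(k_{1},\dots,k_{r},\{l\}^{\infty})$ as a finite-depth sum weighted by $\prod_{s=2}^{m_{r}}\frac{s^{l}}{s^{l}-1}$; for $l=1$ the product telescopes to $m_{r}$, giving (1), and for $l=2$ a short induction on the last entry gives (2). Part (3) is a direct rearrangement into $\prod_{m\geq2}\frac{m^{k}}{m^{k}-1}$ followed by Gauss's limit formula for $\Gamma$. Parts (4), (5), (6) are obtained by letting the number of periods tend to infinity in known \emph{finite}-depth evaluations: the cyclic-sum formula $\zeta^{\star}(\{2,\{1\}^{n-2}\}^{a},1)=n\zeta(an+1)$ for (4), and Zhao's identities expressing $\zeta^{\star}(\{\{2\}^{n},1\}^{d})$ and $\zeta^{\star}(\{\{2\}^{n},3,\{2\}^{n},1\}^{d})$ as harmonic-type sums with factors $2^{\#\{m_{1},\dots\}}$ (with signs $(-1)^{m_{1}+\cdots}$ in the second case), whose $d\to\infty$ limits are exactly the infinite products that the stated $\Gamma$-products encode. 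If you wish to pursue the connection-coefficient approach, the missing work is precisely the limit-to-coefficient justification and the resonant evaluation; the paper's use of these known finite identities sidesteps both.
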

\begin{ex} \label{ex:values}
We have the following equalities:
\begin{enumerate}
\item $\zeta^\star(\{4\}^{\infty})=\frac{8\pi}{e^{\pi}-e^{-\pi}},$
\item $\zeta^\star(3,\{2\}^{\infty})=2\zeta(2)-2,$
\item $\zeta^\star(\{3,1\}^{\infty})=\frac{4(e^{\pi}+1)}{\pi(e^{\pi}-1)},$
\item $\zeta^\star(\{2\}^{\infty})=2,$
\item $\zeta^\star(\{2,1\}^{\infty})=3$.
\end{enumerate}
\end{ex}

Second, let us define $Z^\star:[0,1]\to [1,\infty]$ by
$Z^\star(0)=1$ and
\[
Z^\star\left(\sum_{j=1}^{\infty}\frac{1}{2^{k_{1}+\cdots+k_{j}}}\right)
=\zeta^\star(k_{1}+1,k_{2},k_{3},\dots),
\footnote{
For example, we have
\[
Z^\star\left(\frac{1}{2}\right)
=Z^\star\left(\sum_{j=2}^{\infty}\frac{1}{2^{j}}\right)
=\zeta^\star(3,1,1,\dots)
=\zeta(2).
\]
}
\]
where $k_1,k_2,\dots\in\mathbb{Z}_{\ge1}$.
The function $Z^\star$ contains information of all multiple zeta-star values for indices of infinite length (see Figure \ref{fig:graph_F} for the graph of $Z^\star$).
\begin{figure}\label{fig:graph_F}
  \centering
  \includegraphics[width=15cm]{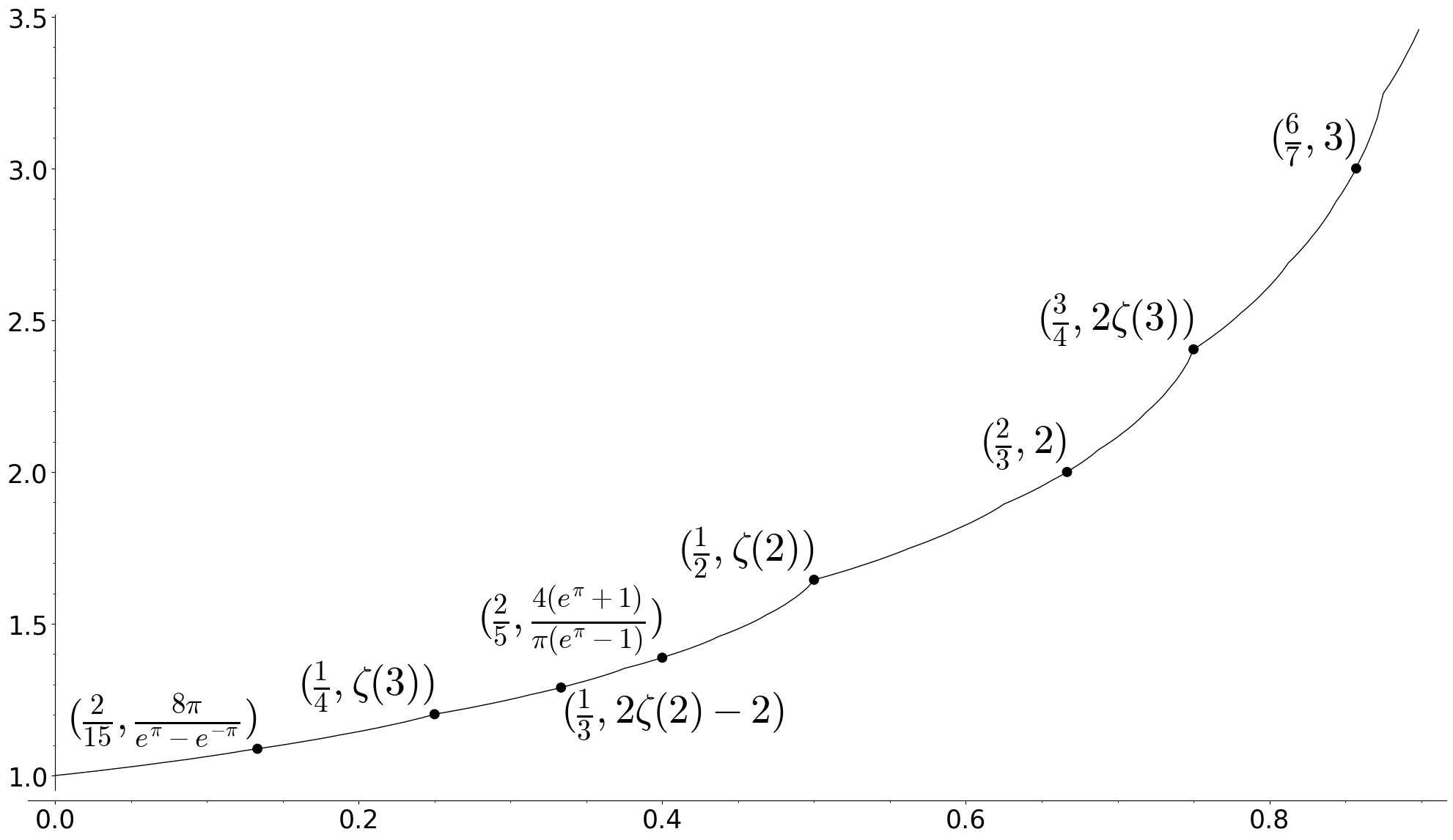}
  \caption{The graph and some special values of $Z^\star$.}
\end{figure}
Given two indices $(k_1,k_2,\dots)$ and $(l_1,l_2,\dots)$, we say that the former is lexicographically smaller than the latter if 
there exists $j$ such that $k_i=l_i$ and $k_j < l_j \; (i \in \{1,\dots,j-1\})$.

\begin{thm} \label{main2}
$Z^\star$ is a continuous and bijective function, or equivalently,
the map 
\[
(k_{1},k_{2},k_{3},\dots)\mapsto\zeta^{\star}(k_{1}+1,k_{2},k_{3},\dots)
\]
 gives an order-reversing bijection between $(\mathbb{Z}_{\geq1}^{\infty},\prec)$
and $(1,\infty]$ where $\prec$ is the lexicographic order.
\end{thm}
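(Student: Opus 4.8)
The plan is to pass to the explicit map $b\colon(\mathbb Z_{\ge1}^\infty,\prec)\to((0,1],\le)$ sending $(k_1,k_2,\dots)$ to $\sum_{j\ge1}2^{-(k_1+\cdots+k_j)}$, i.e.\ to the gap sequence of the \emph{non-terminating} binary expansion of a real number; a short check shows $b$ is a bijection (a number is dyadic exactly when its non-terminating expansion is eventually all ones, corresponding to eventually-one indices) and, comparing binary digits, that $b$ reverses the order. Since $\zeta^\star(k_1+1,k_2,\dots)>1$ always (the all-ones summand contributes $1$ and there are further positive summands) and is finite except at the index $(1,1,1,\dots)$ (by the convergence analysis already recorded), $Z^\star=\phi\circ b^{-1}$ is a well-defined map $(0,1]\to(1,\infty]$, where $\phi(k_1,k_2,\dots)=\zeta^\star(k_1+1,k_2,\dots)$. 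As $b^{-1}$ reverses order, $Z^\star$ is order-preserving iff $\phi$ is order-reversing and is bijective iff $\phi$ is, and an order-preserving bijection between two intervals of $\overline{\mathbb R}$ cannot have a jump (that would leave a gap in its range) and so is continuous. Hence it suffices to prove that $\phi$ is a strictly order-reversing bijection onto $(1,\infty]$, equivalently that $Z^\star$ is strictly increasing, continuous, and onto.

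For the monotonicity I would use the truncated values $\zeta^\star_{\le M}(b_1,b_2,\dots):=\sum_{M\ge n_1\ge n_2\ge\cdots\to1}\prod_i n_i^{-b_i}$ for $M\in\mathbb Z_{\ge1}$. These are always finite: $\zeta^\star_{\le M}(b_1,b_2,\dots)\le\zeta^\star_{\le M}(\{1\}^\infty)=\prod_{v=2}^M\frac{v}{v-1}=M$ by telescoping. They satisfy the recursion $\zeta^\star_{\le M}(b_1,b_2,\dots)=1+\sum_{m=2}^M m^{-b_1}\zeta^\star_{\le m}(b_2,b_3,\dots)$ together with its $M=\infty$ version (setting $\zeta^\star_{\le\infty}:=\zeta^\star$), and the \emph{tiling estimate}: for fixed $a$ and $M\ge2$ (or $M=\infty$), all the values $\zeta^\star_{\le M}(a,b_2,b_3,\dots)$ lie in $(\zeta_{\le M}(a),\zeta_{\le M}(a-1)]$, where $\zeta_{\le M}(s)=\sum_{n=1}^M n^{-s}$ (and $\zeta_{\le\infty}(1):=\infty$); the right endpoint is attained at the all-ones tail, since $\zeta^\star_{\le M}(a,\{1\}^\infty)=\sum_{n=1}^M n^{-a}\zeta^\star_{\le n}(\{1\}^\infty)=\sum_{n=1}^M n^{-(a-1)}$, while $\zeta_{\le M}(a)$ is a strict lower bound coming from the summands with $n_2=n_3=\cdots=1$. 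As $a$ grows these half-open intervals move strictly downward and partition $(1,M]$ (resp.\ $(1,\infty]$). Now I would prove, by induction on the first position $j$ at which $\mathbf a\prec\mathbf b$ differ, that $\zeta^\star_{\le M}(\mathbf a)\ge\zeta^\star_{\le M}(\mathbf b)$, strictly whenever $M\ge2$: the case $j=1$ is the tiling estimate (if $a_1<b_1$ then $\zeta^\star_{\le M}(\mathbf a)>\zeta_{\le M}(a_1)\ge\zeta_{\le M}(b_1-1)\ge\zeta^\star_{\le M}(\mathbf b)$), and the case $j\ge2$ applies the recursion and the inductive hypothesis (at each level $m\le M$) to the shifted indices $(a_i)_{i\ge2}\prec(b_i)_{i\ge2}$. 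Taking $M=\infty$ and the indices $(k_1+1,k_2,\dots)$ (the index $(1,1,1,\dots)$, the $\prec$-minimum, is sent to $\infty=\max(1,\infty]$ and needs no separate argument), this shows $\phi$ is strictly order-reversing, hence injective.

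It remains to prove that $Z^\star$ is continuous with range $(1,\infty]$; given monotonicity this follows from continuity together with the boundary behaviour $Z^\star(1)=\zeta^\star(2,\{1\}^\infty)=\infty$ and $Z^\star(x)\le\zeta^\star(k_1+1,\{1\}^\infty)=\zeta(k_1)\to1$ as $x\to0^+$ (for such $x$, $k_1\ge\log_2(1/x)$; the equalities use Theorem~\ref{special_values}(1)). For continuity at $x_0$ I would distinguish two cases. If $x_0$ is not dyadic, then $b^{-1}(x_0)$ has infinitely many entries $\ge2$, and any $x$ near $x_0$ has $b^{-1}(x)$ agreeing with $b^{-1}(x_0)$ on an arbitrarily long prefix, so the indices fed to $\zeta^\star$ share a long prefix $c_1,\dots,c_r$ (with $c_1\ge2$ and infinitely many $c_i\ge2$), whence $|Z^\star(x)-Z^\star(x_0)|$ is at most the oscillation of $\zeta^\star(c_1,\dots,c_r,-)$ over all tails. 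That oscillation equals $\zeta^\star(c_1,\dots,c_r,\{1\}^\infty)-\inf_{\text{tail}}\zeta^\star(c_1,\dots,c_r,\text{tail})$: by Theorem~\ref{special_values}(1) the first term is the finite multiple zeta-star value obtained by absorbing the trailing ones and it decreases to $\zeta^\star(c_1,c_2,\dots)$, while the length-$r$ value $\zeta^\star(c_1,\dots,c_r)$---a lower bound for every tail---increases to the same limit by monotone convergence; the remaining point, that $\zeta^\star(c_1,\dots,c_r,\{1\}^\infty)$ decreases to $\zeta^\star(c_1,c_2,\dots)$, reduces to an estimate of the form $\sum_{n_1\ge\cdots\ge n_r\ge2}(\prod_i n_i^{-c_i})\,n_r\to0$, which I would prove by peeling coordinates off the right, using that each factor with $c_i\ge2$ renders the corresponding sum $\sum_N N^{-c_i}(\log N)^k$ convergent, and that there are infinitely many such $i$. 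If $x_0$ is dyadic, $b^{-1}$ is discontinuous there: from one side the previous estimate still applies, while from the other side $b^{-1}(x)$ escapes to infinity in a single coordinate, and I would sandwich $Z^\star(x)=\zeta^\star(\dots,N,1,1,\dots)$ ($N\to\infty$) between $\zeta^\star(\dots,N,\{1\}^\infty)$ and the corresponding value, evaluating the limit via Theorem~\ref{special_values}(1) and, when the shifted index of $x_0$ is tailed by $\{2\}^\infty$, via Theorem~\ref{special_values}(2). Finally $\sup_{x<1}Z^\star(x)=\infty$---needed for continuity at $x_0=1$---follows from Theorem~\ref{special_values}(4): the indices $(\{2,\{1\}^{n-2}\}^\infty)$ correspond to points $x_n\to1^-$ with $Z^\star(x_n)=n$.

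The binary bookkeeping and the monotonicity induction (which rests on the elementary tiling estimate) are routine. \textbf{The main obstacle is the continuity of $Z^\star$}: concretely, the uniform control of $\zeta^\star(c_1,\dots,c_r,\text{tail})$ independently of the tail once a long prefix is fixed---i.e.\ the decay $\sum_{n_1\ge\cdots\ge n_r\ge2}(\prod_i n_i^{-c_i})\,n_r\to0$ exploiting the infinitely many entries $\ge2$---and the matching two-sided analysis at dyadic points, where the index representation jumps between its two forms and one side runs off to infinity in a coordinate. Theorem~\ref{special_values}(1) (absorbing trailing ones) and, at the exceptional dyadic points, Theorem~\ref{special_values}(2) and (4) are exactly what make these limits computable; once continuity is established, bijectivity is immediate from monotonicity and the two boundary values.
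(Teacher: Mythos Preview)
Your proposal is correct and follows the same route as the paper: strict order-reversal via Theorem~\ref{special_values}(1), then continuity by the finite/infinite-many-entries-$\ge2$ case split (your dyadic/non-dyadic dichotomy), with the hard case handled by a telescoping estimate---the paper's Lemma~\ref{aaaaa} is exactly your ``peeling'' bound, yielding a decay of shape $(3/4)^r$. The paper's monotonicity step is slightly more direct than yours (no truncated values $\zeta^\star_{\le M}$ or induction: one just compares $\zeta^\star(\mathbf k'_+)\le\zeta^\star(k_1{+}1,\dots,k_{r-1},k_r',\{1\}^\infty)=\zeta^\star(k_1{+}1,\dots,k_r'-1)\le\zeta^\star(k_1{+}1,\dots,k_r)<\zeta^\star(\mathbf k_+)$), your reference to Theorem~\ref{special_values}(2) is not needed since a dyadic $x_0$ always corresponds to a $\{1\}^\infty$-tail, and your explicit treatment of the boundary $x\to1^-$ via Theorem~\ref{special_values}(4) fills in a point the paper leaves implicit.
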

\begin{rem}
The order structure for the set of multiple zeta values is studied by Kumar \cite{Kum}.
\end{rem}
\begin{rem}
 Li, independently of our study, obtained the same results as Theorem \ref{special_values} (1), (4), Theorem \ref{main2}, and further studied related topics. 
 For more details, see \cite{Jia}. 
\end{rem}

\begin{thm} \label{main3}
The map $Z^\star$ is not differentiable on some dense set.
More precisely, we have the followings:
\begin{enumerate}
 \item The map $Z^\star$ is right-differentiable at $z$ for $0\leq z<1$.
 \item The map $Z^\star$ is left-differentiable at $z$ if $z\not\in \{1-\frac{1}{2^n}\mid n>0\}$.
 \item The map $Z^\star$ is not left-differentiable at $z$ if $z\in \{1-\frac{1}{2^n}\mid n>0\}$.
 \item The left-differential $\partial_{-}Z^{\star}(z)$ is equal to the right-differential $\partial_{+}Z^{\star}(z)$ if $z\in(0,1)\setminus\left\{ \frac{a}{2^{n}}\mid0<a<2^{n},n>0\right\}$.
 \item The left-differential $\partial_{-}Z^{\star}(z)$ is greater than the right differential $\partial_{+}Z^{\star}(z)$ if $z\in\left\{ \frac{a}{2^{n}}\mid0<a<2^{n}-1,n>0\right\} $.
\end{enumerate}
\end{thm}
The proof of Theorem \ref{main3} will be given in Theorems \ref{47a}, \ref{48a}, \ref{cor:ccc}, and \ref{thm:ddd} (see also Remark \ref{rem:left=right}).

\section{Special values}
\begin{lem}\label{lem:linf}
For $k_{1},\dots,k_{r},l\in\mathbb{Z}_{\geq1}$ with $k_{1}>1$, we
have
\[
\zeta^{\star}(k_{1},\dots,k_{r},\{l\}^{\infty})=\sum_{m_{1}\ge\cdots\ge m_{r}\ge1}\frac{1}{m_{1}^{k_{1}}\cdots m_{r}^{k_{r}}}\prod_{s=2}^{m_{r}}\frac{s^{l}}{s^{l}-1}.
\]
\end{lem}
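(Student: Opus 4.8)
The plan is to compute the infinite tail $\zeta^\star(\{l\}^\infty)$-type contribution directly from the definition and factor it out of the sum. Starting from the series definition of $\zeta^\star(k_1,\dots,k_r,\{l\}^\infty)$, the summation runs over weakly decreasing sequences $m_1\ge m_2\ge\cdots\ge m_r\ge m_{r+1}\ge m_{r+2}\ge\cdots\ge 1$ with $m_j\to 1$; the variables $m_1,\dots,m_r$ carry exponents $k_1,\dots,k_r$ and the remaining variables $m_{r+1}\ge m_{r+2}\ge\cdots\ge 1$ (with limit $1$) all carry exponent $l$. So I would fix $m_1\ge\cdots\ge m_r\ge 1$ and sum over the tail $m_r\ge m_{r+1}\ge m_{r+2}\ge\cdots\ge 1$ independently.

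First I would isolate the tail sum: for a fixed positive integer $N=m_r$, I claim
\[
\sum_{N\ge n_1\ge n_2\ge\cdots\ge 1,\ n_j\to 1}\frac{1}{n_1^l n_2^l\cdots}=\prod_{s=2}^{N}\frac{s^l}{s^l-1}.
\]
To see this, group the tail variables by value: a weakly decreasing sequence bounded above by $N$ with limit $1$ is determined by choosing, for each integer $s$ with $2\le s\le N$, a finite multiplicity $a_s\ge 0$ (the number of indices equal to $s$), while the value $1$ occurs infinitely often. The corresponding term is $\prod_{s=2}^{N}(s^{-l})^{a_s}=\prod_{s=2}^N s^{-l a_s}$, and summing each $a_s$ independently over $\mathbb{Z}_{\ge 0}$ gives $\prod_{s=2}^{N}\sum_{a_s\ge 0}s^{-l a_s}=\prod_{s=2}^{N}(1-s^{-l})^{-1}=\prod_{s=2}^{N}\frac{s^l}{s^l-1}$, where convergence of each geometric series uses $l\ge 1$ and $s\ge 2$. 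Substituting $N=m_r$ and reassembling the outer sum over $m_1\ge\cdots\ge m_r\ge 1$ yields the claimed identity.

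The main technical obstacle is justifying the interchange of summations and the decomposition of the tail — i.e., that the iterated (outer times inner) sum equals the original multiple sum, and that the rearrangement of the tail by multiplicities is legitimate. Since all terms are positive, Tonelli's theorem handles both: every rearrangement and grouping of a series of nonnegative terms preserves the (possibly infinite) value, so the factorization is unconditionally valid as an identity in $(0,\infty]$. I would remark that this also makes transparent when the whole expression is finite: the tail factor $\prod_{s=2}^{m_r}\frac{s^l}{s^l-1}$ is a finite product hence finite for each $m_r$, so convergence of the full series is equivalent to convergence of the finite-depth-looking sum on the right — which, for $l\ge 2$ or for $l=1$ with $r\ge 2$ or $k_r\ge 2$, can be compared termwise to an ordinary convergent multiple zeta(-star) value (and fails precisely in the excluded case $k_1=2$, all other $k_j=1$, consistent with the earlier remark). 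One small point to be careful about: when $m_r=1$ the product $\prod_{s=2}^{1}$ is empty and equals $1$, matching the fact that the tail is then forced to be the constant sequence $1,1,\dots$ contributing a single term equal to $1$.
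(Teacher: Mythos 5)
Your proof is correct and follows essentially the same route as the paper's: both isolate the tail over $m_r\ge n_1\ge n_2\ge\cdots\ge 1$, reparametrize it by the multiplicities of each value $s$ with $2\le s\le m_r$, and evaluate the resulting independent geometric series to get $\prod_{s=2}^{m_r}\frac{s^l}{s^l-1}$. The only cosmetic difference is that the paper phrases the tail computation as a limit over finite truncations of depth $R$ with $c_1+\cdots+c_{m_r}=R$, whereas you work directly with the infinite sum and invoke Tonelli; both justifications are valid since all terms are nonnegative.
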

\begin{proof}
It follows from the following calculation:
\begin{align*}
 & \zeta^{\star}(k_{1},\dots,k_{r},\{l\}^{\infty})\\
 & =\lim_{R\to\infty}\sum_{m_{1}\ge\cdots\ge m_{r}\ge n_{1}\ge\cdots\ge n_{R}\ge1}\frac{1}{m_{1}^{k_{1}}\cdots m_{r}^{k_{r}}n_{1}^{l}\cdots n_{R}^{l}}\\
 & =\sum_{m_{1}\ge\cdots\ge m_{r}\ge1}\frac{1}{m_{1}^{k_{1}}\cdots m_{r}^{k_{r}}}\lim_{R\to\infty}\sum_{m_{r}\ge n_{1}\ge\cdots\ge n_{R}\ge1}\frac{1}{n_{1}^{l}\cdots n_{R}^{l}}\\
 & =\sum_{m_{1}\ge\cdots\ge m_{r}\ge1}\frac{1}{m_{1}^{k_{1}}\cdots m_{r}^{k_{r}}}\lim_{R\to\infty}\sum_{c_{1}+\cdots+c_{m_{r}}=R}\prod_{s=1}^{m_{r}}\frac{1}{s^{lc_{s}}}\qquad(c_{s}\coloneqq\#\{j:n_{j}=s\})\\
 & =\sum_{m_{1}\ge\cdots\ge m_{r}\ge1}\frac{1}{m_{1}^{k_{1}}\cdots m_{r}^{k_{r}}}\sum_{c_{2},\dots,c_{m_{1}}=0}^{\infty}\prod_{s=2}^{m_{r}}\frac{1}{s^{lc_{s}}}\\
 & =\sum_{m_{1}\ge\cdots\ge m_{r}\ge1}\frac{1}{m_{1}^{k_{1}}\cdots m_{r}^{k_{r}}}\prod_{s=2}^{m_{r}}\sum_{c=0}^{\infty}\frac{1}{s^{lc}}\\
 & =\sum_{m_{1}\ge\cdots\ge m_{r}\ge1}\frac{1}{m_{1}^{k_{1}}\cdots m_{r}^{k_{r}}}\prod_{s=2}^{m_{r}}\frac{s^{l}}{s^{l}-1}.\qedhere
\end{align*}
\end{proof}

\begin{proof}[Proof of Theorem \ref{special_values} (1)]
By Lemma \ref{lem:linf}, we have
\begin{align*}
 \zeta^{\star}(k_{1},\dots,k_{r-1},k_{r}+1,\{1\}^{\infty})
 &=\sum_{m_{1}\ge\cdots\ge m_{r}\ge1}
  \frac{1}{m_{1}^{k_{1}}\cdots m_{r-1}^{k_{r-1}}  m_{r}^{k_{r}+1}}\prod_{s=2}^{m_{r}}\frac{s}{s-1}\\
 & =\sum_{m_{1}\ge\cdots\ge m_{r}\ge1}\frac{1}{m_{1}^{k_{1}}\cdots m_{r}^{k_{r}}}\\
 & =\zeta^{\star}(k_{1},\dots,k_{r}).\qedhere
\end{align*}
\end{proof}

\begin{proof}[Proof of Theorem \ref{special_values} (2)]
Let $L(k_1,\dots,k_r)$ (resp.\ $R(k_1,\dots,k_r)$) be the left (resp. right) hand side of the theorem. By Lemma \ref{lem:linf}, we have
\begin{align*}
L(k_{1},\dots,k_{r}) & =\sum_{m_{1}\geq\cdots\geq m_{r}\ge1}\frac{1}{m_{1}^{k_{1}}\cdots m_{r}^{k_{r}}}\prod_{m=2}^{m_{r}}\frac{m^{2}}{m^{2}-1}\\
 & =2\sum_{m_{1}\geq\cdots\geq m_{r}\ge1}\frac{1}{m_{1}^{k_{1}}\cdots m_{r-1}^{k_{r-1}}}\cdot\frac{1}{m_{r}^{k_{r}-1}(m_{r}+1)}.
\end{align*}
Thus,
\begin{align*}
 & L(k_{1},\dots,k_{r},a)+L(k_{1},\dots,k_{r},a+1)\\
 & =2\sum_{m_{1}\geq\cdots\geq m_{r}\geq n\geq1}\frac{1}{m_{1}^{k_{1}}\cdots m_{r}^{k_{r}}}\cdot\frac{1}{n+1}\left(\frac{1}{n^{a-1}}+\frac{1}{n^{a}}\right)\\
 & =2\sum_{m_{1}\geq\cdots\geq m_{r}\geq n\geq1}\frac{1}{m_{1}^{k_{1}}\cdots m_{r}^{k_{r}}n^{a}}\\
 & =2\zeta^{\star}(k_{1},\dots,k_{r},a).
\end{align*}
On the other hand, by definition,
\[
R(k_{1},\dots,k_{r},a)+R(k_{1},\dots,k_{r},a+1)=2\zeta^{\star}(k_{1},\dots,k_{r},a).
\]
Thus we have
\[
L(k_1,\dots,k_r,a) + L(k_1,\dots,k_r,a+1) 
= R(k_1,\dots,k_r,a) + R(k_1,\dots,k_r,a+1).
\]
Furthermore, we have
\begin{align*}
L(k_1,\dots,k_r,2) &= L(k_1,\dots,k_r),\\
R(k_1,\dots,k_r,2) &= R(k_1,\dots,k_r),    
\end{align*}
and $L(2)=2=R(2)$. 
Thus the claim follows by induction.
\end{proof}

\begin{proof}[Proof of Theorem \ref{special_values} (3)]
We have
\begin{align*}
 \zeta^{\star}(\{k\}^{\infty}) 
 =\sum_{m_{1}\geq m_{2}\geq\cdots\ge1}
  \frac{1}{m_{1}^{k}m_{2}^{k}\cdots}
 =\prod_{m=2}^{\infty}\sum_{s=0}^{\infty}\frac{1}{m^{ks}}
 =\prod_{m=2}^{\infty} \frac{m^{k}}{m^{k}-1}.
\end{align*} 
Since
\[
 m^k-1
 =\prod_{c^k=1} ( m-c ),
\]
we have
\begin{align*}
 \prod_{m=2}^{\infty}\frac{m^{k}}{m^{k}-1}
 &=\lim_{n\to \infty}\prod_{m=2}^{n}\prod_{c^k=1}\frac{m}{m-c}\\
 &=\prod_{c^k=1}\lim_{n\to \infty}\prod_{m=2}^{n}\frac{2+(m-2)}{2-c+(m-2)}.
\end{align*}
Using
\[
 \Gamma(x)
 =\lim_{n\to\infty} \frac{n^x n!}{x(x+1)\cdots(x+n)},
\]
we obtain
\begin{align*}
 \prod_{m=2}^{\infty}\frac{m^{k}}{m^{k}-1}
 =\prod_{c^k=1}\frac{\Gamma(2-c)}{\Gamma(2)}
 =\prod_{c^k=1}\Gamma(2-c).&\qedhere
\end{align*}
\end{proof}

\begin{proof}[Proof of Theorem \ref{special_values} (4)]
It is known that
$\zeta^{\star}(\{2,\{1\}^{n-2}\}^{a},1)=n\zeta(an+1)$ (see \cite{OW} and \cite{Zlo}).
Thus, we have
\[
\zeta^{\star}(\{2,\{1\}^{n-2}\}^{\infty})
=\lim_{a\to\infty}\zeta^{\star}(\{2,\{1\}^{n-2}\}^{a},1)=n.\qedhere
\]
\end{proof}

\begin{proof}[Proof of Theorem \ref{special_values} (5)]
Using \cite[Theorem 1.2]{Zha}, we have
\begin{align*}
 \zeta^{\star}(\{\{2\}^{n},1\}^{d}) 
 &=\sum_{m_{1}\geq\cdots\geq m_{d}\geq1}\frac{2^{\#\{m_{1},\dots,m_{d}\}}}{m_{1}^{2n+1}\cdots m_{d}^{2n+1}} \\
 & =\sum_{\substack{(c_1,c_2,\dots)\in\mathbb{Z}_{\geq0}^{\infty}\\\sum_{m=1}^{\infty}c_{m}=d}}
  \prod_{\substack{m\ge 1 \\ c_{m}\ge 1}}
  \frac{2}{m^{(2n+1)c_{m}}},
\end{align*}
where we put $c_{m}=\#\{j:m_{j}=m\}$.
Then
\begin{align*}
 \zeta^{\star}(\{\{2\}^{n},1\}^{d}) 
 =\sum_{\substack{(c_{2},c_{3},\dots)\in\mathbb{Z}_{\geq0}^{\infty}\\
 \sum_{m=2}^{\infty}c_{m}=d}}
  \prod_{\substack{m\ge 2 \\ c_{m}\ge 1}}
  \frac{2}{m^{(2n+1)c_{m}}}
 +2\sum_{\substack{(c_{2},c_{3},\dots)\in\mathbb{Z}_{\geq0}^{\infty}\\
 \sum_{m=2}^{\infty}c_{m}<d}}
  \prod_{\substack{m\ge 2 \\ c_{m}\ge 1}}
  \frac{2}{m^{(2n+1)c_{m}}}.
\end{align*}

Since
\begin{align*}
 \lim_{d\to\infty}
 \sum_{\substack{(c_{2},c_{3},\dots)\in\mathbb{Z}_{\geq0}^{\infty}\\
  \sum_{m=2}^{\infty}c_{m}=d}}
 \prod_{\substack{m\ge 2 \\ c_{m}\ge 1}}
 \frac{2}{m^{(2n+1)c_{m}}}
 =0
\end{align*}
and
\begin{align*}
 \lim_{d\to\infty}
 \sum_{\substack{(c_{2},c_{3},\dots)\in\mathbb{Z}_{\geq0}^{\infty}\\
 \sum_{m=2}^{\infty}c_{m}<d}}
  \prod_{\substack{m\ge 2 \\ c_{m}\ge 1}}
  \frac{2}{m^{(2n+1)c_{m}}}
 &=
 \sum_{(c_{2},c_{3},\dots)\in\mathbb{Z}_{\geq0}^{\infty}}
  \prod_{\substack{m\ge 2 \\ c_{m}\ge 1}}
  \frac{2}{m^{(2n+1)c_{m}}}\\
 &=\prod_{m=2}^{\infty}
  \left(
   1+2\sum_{c=1}^{\infty} \frac{1}{m^{(2n+1)c}}
  \right),
\end{align*}
we have
\begin{align*}
 \zeta^{\star}(\{\{2\}^{n},1\}^{\infty})
 &=2\prod_{m=2}^{\infty}
  \frac{m^{2n+1}+1}{m^{2n+1}-1}.
\end{align*}
We obtain the result by a similar calculation as in the proof of Theorem \ref{special_values} (3).
\end{proof}

\begin{proof}[Proof of Theorem \ref{special_values} (6)]
Using the equation \cite[Theorem 4.8 (2-c-2-1)]{Zha} with $c_1=\cdots=c_r=3$ and $a_1=\cdots=a_r=b_1=\cdots=b_r=n$, we have
\[
\zeta^{\star}(\{\{2\}^{n},3,\{2\}^{n},1\}^{d})
=\sum_{m_{1}\geq\cdots\geq m_{2d}\ge1}\frac{(-1)^{m_{1}+\cdots+m_{2d}}2^{\#\{m_{1},\dots,m_{2d}\}}}{m_{1}^{2n+2}\cdots m_{2d}^{2n+2}}.
\]
Using this equality, we have
\begin{align*}
 \lim_{d\to\infty}\zeta^{\star}(\{\{2\}^{n},3,\{2\}^{n},1\}^{d}) 
 & =2\prod_{m=2}^{\infty}
  \left(1+2\left(\frac{(-1)^{m}}{m^{2n+2}}+\frac{(-1)^{2m}}{m^{2(2n+2)}}+\cdots\right)\right)\\
 & =2\prod_{m=2}^{\infty}\left(1+\frac{2}{(-1)^{m}m^{2n+2}-1}\right)\\
 & =2\prod_{m=2}^{\infty}\left(\frac{m^{2n+2}+(-1)^{m}}{m^{2n+2}-(-1)^{m}}\right).
\end{align*}
Then we find
\begin{align*}
 \lim_{d\to\infty}\zeta^{\star}(\{\{2\}^{n},3,\{2\}^{n},1\}^{d}) 
 & =2\prod_{m=2}^{\infty}\left(\frac{m^{2n+2}-1}{m^{2n+2}+1}\right)\times\prod_{m=2:\mathrm{even}}^{\infty}\left(\frac{m^{2n+2}+1}{m^{2n+2}-1}\right)^{2}\\
 & =2\prod_{m=2}^{\infty}\left(\frac{m^{2n+2}-1}{m^{2n+2}+1}\right)\times\prod_{m=1}^{\infty}\left(\frac{m^{2n+2}+(1/2)^{2n+2}}{m^{2n+2}-(1/2)^{2n+2}}\right)^{2}.
\end{align*}
By a similar calculation as in Proof of Theorem \ref{special_values} (3), we get
\begin{align*}
 \lim_{d\to\infty}\zeta^{\star}(\{\{2\}^{n},3,\{2\}^{n},1\}^{d}) 
 & =2\frac{\prod_{c^{2n+2}=1}\Gamma(2-c)^{-1}\Gamma(1-\frac{c}{2})^{2}}{\prod_{c^{2n+2}=-1}\Gamma(2-c)^{-1}\Gamma(1-\frac{c}{2})^{2}}\\
 & =2\prod_{s\in\{\pm1\}}\prod_{c^{2n+2}=s}\Gamma(2-c)^{-s}\Gamma\left(1-\frac{c}{2}\right)^{2s}.\qedhere
\end{align*}
\end{proof}

\section{Order property and continuity of the zeta-star map} \label{order_property}
In this section, we will give a proof of Theorem \ref{main2}.
\begin{lem} \label{aaaaa}
 For positive integers $a,b,A$ with $A\ge2$, we have
\begin{align*}
 &\sum_{
  m_{1}\ge\cdots\ge m_{a}\ge
  n_{1}\ge\cdots\ge n_{b}\ge A}
  \frac{1}{m_1^{2}m_{2}\cdots m_{a}
           n_{1}^{k_1}\cdots n_{b}^{k_b}}\leq
  \sum_{
  n_{1}\ge\cdots\ge n_{b}\ge A}
  \frac{1}{(n_1-1)
           n_{1}^{k_1}\cdots n_{b}^{k_b}},\\
 &\sum_{
  m_{1}\ge\cdots\ge m_{a}\ge
  n_{1}\ge\cdots\ge n_{b}\ge A}
  \frac{1}{(m_1-1)m_1^{2}m_{2}\cdots m_{a}
           n_{1}^{k_1}\cdots n_{b}^{k_b}}\\
 &\qquad\qquad\qquad\qquad\qquad\qquad\qquad\qquad\qquad\leq
  \left(\frac{A+1}{2A}\right)^a\sum_{
  n_{1}\ge\cdots\ge n_{b}\ge A}
  \frac{1}{(n_1-1)
           n_{1}^{k_1+1}n_2^{k_2}\cdots n_{b}^{k_b}}.
\end{align*}
\end{lem}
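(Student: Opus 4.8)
The plan is to reduce both inequalities to one-dimensional telescoping estimates in $m_1,\dots,m_a$ alone, with $n_1,\dots,n_b$ held fixed. Indeed, grouping the terms of each left-hand side according to the value of the tuple $(n_1,\dots,n_b)$, it suffices to prove that for every integer $N\ge A$,
\[
U_a(N)\coloneqq\sum_{m_1\ge\cdots\ge m_a\ge N}\frac{1}{m_1^{2}m_2\cdots m_a}\le\frac{1}{N-1}
\qquad\text{and}\qquad
T_a(N)\coloneqq\sum_{m_1\ge\cdots\ge m_a\ge N}\frac{1}{(m_1-1)m_1^{2}m_2\cdots m_a}\le\left(\frac{A+1}{2A}\right)^{a}\frac{1}{(N-1)N};
\]
applying these with $N=n_1$, multiplying by $n_1^{-k_1}\cdots n_b^{-k_b}$, and summing over $A\le n_b\le\cdots\le n_1$ then yields the lemma, the factor $\frac{1}{(n_1-1)n_1}$ in the second bound accounting for the shift $k_1\mapsto k_1+1$ on the right.

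For $U_a(N)$ I would peel off the variables one at a time, largest first. Bounding $m_1^{-2}\le\frac{1}{(m_1-1)m_1}=\frac{1}{m_1-1}-\frac{1}{m_1}$ and summing over $m_1\ge m_2$ telescopes to $\frac{1}{m_2-1}$; multiplying by the leftover $m_2^{-1}$ gives $\frac{1}{(m_2-1)m_2}=\frac{1}{m_2-1}-\frac{1}{m_2}$, which has exactly the same shape with one variable fewer. After $a$ such steps the innermost sum $\sum_{m_a\ge N}\bigl(\frac{1}{m_a-1}-\frac{1}{m_a}\bigr)=\frac{1}{N-1}$ appears, so $U_a(N)\le\frac{1}{N-1}$. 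Formally this is an induction on $a$; a clean way to phrase it is to set $W_a(N)\coloneqq\sum_{m_1\ge\cdots\ge m_a\ge N}\frac{1}{(m_1-1)m_1\,m_2\cdots m_a}$, observe $U_a(N)\le W_a(N)$ termwise, and check $W_a(N)=W_{a-1}(N)=\cdots=W_1(N)=\frac{1}{N-1}$ by the telescoping just described.

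For $T_a(N)$ the crux is the elementary inequality
\[
\frac{1}{(m-1)m^{2}}\le\frac{A+1}{2A}\left(\frac{1}{(m-1)m}-\frac{1}{m(m+1)}\right)\qquad(m\ge A),
\]
which holds because the right-hand side equals $\frac{A+1}{2A}\cdot\frac{2}{(m-1)m(m+1)}$ and $\frac{m+1}{m}\le\frac{A+1}{A}$ precisely when $m\ge A$. Summing it over $m=m_1\ge m_2$ and telescoping gives $T_1(m_2)\le\frac{A+1}{2A}\cdot\frac{1}{(m_2-1)m_2}$, which is the case $a=1$ (put $m_2=N$). For $a\ge2$ the same bound makes the innermost $m_1$-sum of $T_a(N)$ collapse to $\frac{A+1}{2A}\cdot\frac{1}{(m_2-1)m_2}$; absorbing the leftover $m_2^{-1}$ reproduces $\frac{A+1}{2A}$ times a sum of exactly the same type in $m_2,\dots,m_a$, i.e.\ $T_a(N)\le\frac{A+1}{2A}\,T_{a-1}(N)$, and induction on $a$ finishes it.

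I expect the only genuinely delicate point to be pinning down the sharp constant: one must \emph{not} use the partial-fraction splitting $\frac{1}{(m-1)m^{2}}=\frac{1}{(m-1)m}-\frac{1}{m^{2}}$, which only yields $T_1(N)\le\frac{1}{(N-1)N}$ and is too weak when $A=2$, but instead dominate $\frac{1}{(m-1)m^{2}}$ by the telescoping difference $\frac{1}{(m-1)m}-\frac{1}{m(m+1)}$; this is what both produces the factor $\frac{A+1}{2A}$ and keeps the sum self-similar under peeling off $m_1$. Everything else is a routine manipulation of absolutely convergent sums of positive terms.
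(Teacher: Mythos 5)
Your proof is correct and follows essentially the same route as the paper: peel off $m_1,\dots,m_a$ one at a time using the telescoping bounds $\frac{1}{m^{2}}\le\frac{1}{m-1}-\frac{1}{m}$ and $\frac{1}{(m-1)m^{2}}\le\frac{A+1}{A}\cdot\frac{1}{(m-1)m(m+1)}=\frac{A+1}{2A}\left(\frac{1}{(m-1)m}-\frac{1}{m(m+1)}\right)$ for $m\ge A$. The only cosmetic difference is that you first freeze $(n_1,\dots,n_b)$ and work with the one-variable sums $U_a$, $T_a$, whereas the paper telescopes inside the full multiple sum; the computations are identical.
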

\begin{proof}
 We have
\begin{align*}
 &\text{L.H.S. of the first equality}\\
 &\le \sum_{
  m_{1}\ge\cdots\ge m_{a}\ge
  n_{1}\ge\cdots\ge n_{b}\ge A}
  \frac{1}{(m_1-1)m_1m_{2}\cdots m_{a}
           n_{1}^{k_1}\cdots n_{b}^{k_b}}\\
 &=\sum_{
  m_{1}\ge\cdots\ge m_{a}\ge
  n_{1}\ge\cdots\ge n_{b}\ge A}
  \left(\frac{1}{m_1-1}-\frac{1}{m_1}\right)\cdot\frac{1}{m_{2}\cdots m_{a}
           n_{1}^{k_1}\cdots n_{b}^{k_b}}\\
 &=\sum_{
  m_{2}\ge\cdots\ge m_{a}\ge
  n_{1}\ge\cdots\ge n_{b}\ge A}
  \frac{1}{(m_2-1)m_2 m_{3}\cdots m_{a}
           n_{1}^{k_1}\cdots n_{b}^{k_b}}.
\end{align*}
Repeating the similar calculations, we obtain the first result.
As for the second equality, we have
\begin{align*}
 &\text{L.H.S. of the second equality}\\
 &\le \frac{A+1}{A}\sum_{
  m_{1}\ge\cdots\ge m_{a}\ge
  n_{1}\ge\cdots\ge n_{b}\ge A}
  \frac{1}{(m_1-1)m_1(m_1+1) m_{2}\cdots m_{a}
           n_{1}^{k_1}\cdots n_{b}^{k_b}}\\
 &=\frac{A+1}{A}\sum_{
  m_{1}\ge\cdots\ge m_{a}\ge
  n_{1}\ge\cdots\ge n_{b}\ge A}
  \frac{1}{2}\left(\frac{1}{m_1(m_1-1)}-\frac{1}{m_1(m_1+1)}\right)\cdot\frac{1}{m_{2}\cdots m_{a}
           n_{1}^{k_1}\cdots n_{b}^{k_b}}\\
 &=\frac{A+1}{2A}\sum_{
  m_{2}\ge\cdots\ge m_{a}\ge
  n_{1}\ge\cdots\ge n_{b}\ge A}
  \frac{1}{(m_2-1)m_2^2 m_{3}\cdots m_{a}
           n_{1}^{k_1}\cdots n_{b}^{k_b}}.
\end{align*}
By repeating the above procedure, we obtain the second result.
\end{proof}
\begin{proof}[Proof that the map $\zeta^{\star}$ is order-reversing]
Let ${\bf k}=(k_{1},k_{2},\dots)\in \mathbb{Z}_{\ge1}^{\infty}$ 
and ${\bf k}'=(k_{1}',k_{2}',\dots)\in \mathbb{Z}_{\ge1}^{\infty}$.
Put ${\bf k}_+=(k_{1}+1,k_{2},k_{3},\dots)$ for ${\bf k}$ and ${\bf k}'_+$ in the same manner.
Assume that ${\bf k}\prec{\bf k'}$ by the lexicographic order. Then
there exists $r\ge1$ such that $k_{i}=k_{i}'$ for $1\le i<r$
and $k_{r}<k_{r}'$. 
Then
\begin{align*}
\zeta^{\star}({\bf k}_+) & =\sum_{m_{1}\geq m_{2}\geq\cdots}\frac{1}{m_{1}^{k_{1}+1}m_{2}^{k_{2}}m_{3}^{k_{3}}\cdots}\\
 & >\sum_{m_{1}\geq m_{2}\geq\cdots\geq m_{r}\geq1}
 \frac{1}{m_{1}^{k_{1}+1} m_{2}^{k_{2}}\cdots m_{r}^{k_{r}}}\\
 & =\zeta^{\star}(k_{1}+1,k_{2},\dots,k_{r})
\end{align*}
and
\begin{align*}
\zeta^{\star}({\bf k}'_+)
&\leq\zeta^{\star}(k_{1}+1,k_{2},\dots, k_{r-1},k_{r}',\{1\}^{\infty})\\
&=\zeta^{\star}(k_{1}+1,k_{2},\dots, k_{r-1},k_{r}'-1)\leq\zeta^{\star}(k_{1}+1,k_{2},\dots,k_{r}). 
\end{align*}
Thus, we have $\zeta^{\star}({\bf k}_+)>\zeta^{\star}({\bf k}'_+)$, i.e., $\zeta^{\star}$ is an order-reversing map.
\end{proof}

\begin{proof}[Proof that $\zeta^{\star}({\bf k})$ is convergent for ${\bf k}\neq (2,\{1\}^{\infty})$]
Let ${\bf k}=(k_1,k_2,\dots)\in\mathbb{Z}_{\ge1}^{\infty}$ with $k_1 \geq 2$ and ${\bf k}\neq (2,\{1\}^{\infty})$.
Then there exists $n\geq 2$ such that $(\{2,\{1\}^{n-2}\}^{\infty}) \prec {\bf k}$. Thus, by Theorem \ref{special_values} (4), $\zeta({\bf k}) \leq \zeta(\{2,\{1\}^{n-2}\}^{\infty})=n$, which implies the convergence of $\zeta({\bf k})$.
\end{proof}

\begin{proof}[Proof that the map $Z^{\star}$ is continuous]
Let ${\bf k}=(k_{1},k_{2},\dots)\in \mathbb{Z}_{\ge1}^{\infty}$ with $k_{1}\ge2$.
We need to show that for any $\epsilon>0$,
there exists ${\bf l}$ and ${\bf l}'$ such that
\begin{align*}
{\bf k}\prec{\bf k}'\prec{\bf l}
&\implies
\zeta^{\star}({\bf k})-\zeta^{\star}({\bf k}')<\epsilon,\\
{\bf l}'\prec{\bf k}'\prec{\bf k}
&\implies
\zeta^{\star}({\bf k}')-\zeta^{\star}({\bf k})<\epsilon.
\end{align*}

Since the sequence $\left(\zeta^{\star}(k_1,\dots,k_n)\right)_{n=1}^\infty$ is bounded and monotone increasing, there exist $r\geq1$ such that
\[
\zeta^{\star}(k_{1},\dots,k_{r})>\zeta^{\star}({\bf k})-\epsilon.
\]
Thus
\[
\zeta^{\star}(k_{1},\dots ,k_{r-1},k_{r}+1,\{1\}^{\infty})>\zeta^{\star}({\bf k})-\epsilon.
\]
By taking ${\bf l}=(k_{1},\dots ,k_{r-1},k_{r}+1,\{1\}^{\infty})$, we obtain the first line.

Now we will show the second line.
We first show the claim for indices with a finite number of elements greater than or equal to $2$.
Let ${\bf k}=(k_{1},\dots,k_{r},\{1\}^\infty)$ with $k_{r}\ge2$.
Since
\begin{align*}
    &\lim_{a\to\infty}
     \zeta^\star(k_1,\dots,k_{r-1},k_r-1,a+1,\{1\}^\infty)\\
    &=\lim_{a\to\infty}\sum_{m_{1}\ge\cdots\ge m_{r}\ge1}\frac{1}{m_{1}^{k_{1}}\cdots m_{r-1}^{k_{r-1}}m_{r}^{k_{r}-1}}\sum_{s=1}^{m_{r}}\frac{1}{s^{a}}\\
    &\leq \lim_{a\to\infty}\zeta(a) \sum_{m_{1}\ge\cdots\ge m_{r}\ge1}\frac{1}{m_{1}^{k_{1}}\cdots m_{r-1}^{k_{r-1}}m_{r}^{k_{r}-1}}\\
    &=\zeta^\star(k_{1},\dots,k_{r-1},k_{r}-1)\\
    &=\zeta^\star({\bf k}),
\end{align*}
there exist $n\geq1$ such that
\[
\zeta^{\star}(k_1,\dots,k_{r-1},k_{r}-1,n+1,\{1\}^{\infty})
<\zeta^{\star}({\bf k})+\epsilon.
\]
Thus, the claim holds for indices with a finite number of elements greater than or equal to $2$.

Assume that there exists infinitely many $j$ such that $k_{j}>1$.
\[
{\bf k}=(a_{1},\{1\}^{b_{1}},a_{2},\{1\}^{b_{2}},\dots)\qquad(a_{j}\geq2,b_{j}\geq0).
\]
We need to show the existence of $s$ such that
\[
\zeta^{\star}(a_{1},\{1\}^{b_{1}},a_{2},\{1\}^{b_{2}},\dots ,a_{s-1},\{1\}^{b_{s-1}},a_{s}-1)<\zeta^{\star}({\bf k})+\epsilon.
\]
Note that 
\begin{align*}
 & \zeta^{\star}(a_{1},\{1\}^{b_{1}},a_{2},\{1\}^{b_{2}},\dots ,a_{s-1},\{1\}^{b_{s-1}},a_{s}-1)-\zeta^{\star}({\bf k})\\
 & <\zeta^{\star}(a_{1},\{1\}^{b_{1}},a_{2},\{1\}^{b_{2}},\dots ,a_{s-1},\{1\}^{b_{s-1}},a_{s}-1)\\
  &\quad
  -\zeta^{\star}(a_{1},\{1\}^{b_{1}},a_{2},\{1\}^{b_{2}},\dots ,a_{s-1},\{1\}^{b_{s-1}},a_{s})\\
 & \leq\zeta^{\star}(2,\{1\}^{b},2,\{1\}^{r-2},1)-\zeta^{\star}(2,\{1\}^{b},2,\{1\}^{r-2},2),
\end{align*}
where $b=b_1$ and $r=b_2+\cdots+b_{s-1}+s-1$.
Then we have
\begin{align*}
 &\zeta^{\star}(a_{1},\{1\}^{b_{1}},a_{2},\{1\}^{b_{2}},\dots ,a_{s-1},\{1\}^{b_{s-1}},a_{s}-1)
  -\zeta^{\star}({\bf k})\\
 &<\sum_{m\geq n_{1}\geq\cdots\geq n_{b+r}\ge1}
  \frac{1}{m^{2}n_{1}\cdots n_{b}n_{b+1}^{2}n_{b+2}\cdots n_{b+r-1}}\left(\frac{1}{n_{b+r}}
  -\frac{1}{n_{b+r}^{2}}\right)\\
 & =\sum_{m\geq n_{1}\geq\cdots\geq n_{b+r}\geq2}\frac{1}{m^{2}n_{1}\cdots n_{b}n_{b+1}^{2}n_{b+2}\cdots n_{b+r-1}}\left(\frac{1}{n_{b+r}}-\frac{1}{n_{b+r}^{2}}\right)\\
 & \leq\sum_{m\geq n_{1}\geq\cdots\geq n_{b+r}\geq2}\frac{1}{m^{2}n_{1}\cdots n_{b}n_{b+1}^{2}n_{b+2}\cdots n_{b+r}}\\
 &\le\left(\frac{3}{4}\right)^{r-1}
  \sum_{n_{b+r}\ge 2}
  \frac{1}{(n_{b+r}-1)n_{b+r}^{2}}.
\end{align*}
Here, we used Lemma \ref{aaaaa} for the last inequality. 
Thus, for any $\epsilon>0$, there exists $s$ such that
\[
\zeta^{\star}(a_{1},\{1\}^{b_{1}},a_{2},\{1\}^{b_{2}},\dots a_{s-1},\{1\}^{b_{s-1}},a_{s}-1)<\zeta^{\star}({\bf k})+\epsilon.
\]
This finishes the proof.
\end{proof}

\section{Analytic properties of the zeta-star map}
This section investigates the differential of $Z^{\star}$.
Hereinafter, we understand $0^0=1$.
\begin{lem}
For $z=\sum_{j=1}^{\infty}\frac{a_{j}}{2^{j}}$ with $a_{j}\in\{0,1\}$,
we have 
\[
Z^{\star}(z)=\sum_{m_{1}\geq m_{2}\geq\cdots\geq1}\frac{a_{1}^{m_{1}-m_{2}}a_{2}^{m_{2}-m_{3}}\cdots}{m_{1}^{2}m_{2}m_{3}\cdots}.
\]
\end{lem}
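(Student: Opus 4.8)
The plan is to unwind both definitions and reduce the claim to a reindexing of a multiple series. Recall that $Z^{\star}$ is defined by $Z^{\star}\bigl(\sum_{j\geq1}2^{-(k_{1}+\cdots+k_{j})}\bigr)=\zeta^{\star}(k_{1}+1,k_{2},k_{3},\dots)$ with $k_{i}\geq1$; equivalently, writing the (non-terminating) binary expansion $z=\sum_{j\geq1}a_{j}/2^{j}$, the positions $j$ with $a_{j}=1$ are precisely $k_{1}<k_{1}+k_{2}<k_{1}+k_{2}+k_{3}<\cdots$, so the index is read off as the sequence of gaps between consecutive $1$'s. Thus $Z^{\star}(z)=\zeta^{\star}(k_{1}+1,k_{2},k_{3},\dots)$, which by the definition of $\zeta^{\star}$ for an infinite index is a sum of nonnegative terms over the weakly decreasing sequences of positive integers that are eventually $1$; the right-hand side of the lemma is manifestly of the same type (its numerator lies in $\{0,1\}$ and is eventually $0^{0}=1$), so both series may be rearranged freely and the identity is to hold in $(1,\infty]$.

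The core step reindexes the right-hand side by grouping $m_{1}\geq m_{2}\geq\cdots$ into constant blocks dictated by the digits $a_{j}$. With the convention $0^{0}=1$, the factor $a_{j}^{m_{j}-m_{j+1}}$ equals $1$ whenever $a_{j}=1$ or $m_{j}=m_{j+1}$, and equals $0$ when $a_{j}=0$ and $m_{j}>m_{j+1}$; hence a term of the sum is nonzero exactly when the sequence $(m_{i})_{i\geq1}$ is constant on each of the blocks $\{1,\dots,k_{1}\},\{k_{1}+1,\dots,k_{1}+k_{2}\},\{k_{1}+k_{2}+1,\dots,k_{1}+k_{2}+k_{3}\},\dots$, i.e. may strictly decrease only at the positions where $a_{j}=1$. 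Writing $n_{i}$ for the common value on the $i$-th block sets up a bijection between such sequences $(m_{i})$ and weakly decreasing sequences $n_{1}\geq n_{2}\geq\cdots\geq1$, compatible with the eventual-value-$1$ condition on both sides. Evaluating the weight $1/(m_{1}^{2}m_{2}m_{3}\cdots)$ one block at a time, the first block contributes $m_{1}^{-2}m_{2}^{-1}\cdots m_{k_{1}}^{-1}=n_{1}^{-(k_{1}+1)}$ and the $i$-th block with $i\geq2$ contributes $n_{i}^{-k_{i}}$, whence
\begin{align*}
\sum_{m_{1}\geq m_{2}\geq\cdots\geq1}\frac{a_{1}^{m_{1}-m_{2}}a_{2}^{m_{2}-m_{3}}\cdots}{m_{1}^{2}m_{2}m_{3}\cdots}
&=\sum_{n_{1}\geq n_{2}\geq\cdots\geq1}\frac{1}{n_{1}^{k_{1}+1}n_{2}^{k_{2}}n_{3}^{k_{3}}\cdots}\\
&=\zeta^{\star}(k_{1}+1,k_{2},k_{3},\dots)=Z^{\star}(z).
\end{align*}

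There is no serious obstacle here — the statement is essentially a bookkeeping identity — but three points deserve care: using $0^{0}=1$ to correctly identify the vanishing terms; tracking the exponent pattern $(2,1,1,\dots)$ across block boundaries, in particular that only the first block carries the square; and the non-uniqueness of the binary expansion at dyadic rationals. For the last point one may simply fix the non-terminating expansion of $z$ (the one used to define $Z^{\star}$) from the start; alternatively, passing to the terminating expansion replaces the index $(k_{1}+1,k_{2},\dots,k_{t})$ by $(k_{1}+1,k_{2},\dots,k_{t-1},k_{t}+1,\{1\}^{\infty})$, and these give the same value by Theorem \ref{special_values} (1), so the asserted formula is insensitive to the choice of expansion.
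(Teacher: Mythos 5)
Your proof is correct and follows essentially the same route as the paper's: identify the nonzero terms via the $0^{0}=1$ convention (forcing $m_{j}=m_{j+1}$ wherever $a_{j}=0$), reindex by the constant blocks to recover $\zeta^{\star}(k_{1}+1,k_{2},\dots)$, and handle the dyadic-rational/terminating-expansion case through Theorem \ref{special_values} (1). The paper merely sketches what you spell out in full.
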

\begin{proof}
Note that $a_{1}^{m_{1}-m_{2}}a_{2}^{m_{2}-m_{3}}\cdots$ vanishes except
for the case $m_{j}=m_{j+1}$ for all $j$ such that $a_{j}=0$. 
The case where there exists infinitely many $j$ such that $a_{j}=1$ follows from the definition of $Z^{\star}$. 
The case $z=0$ also follows from the definition of $Z^{\star}$. 
The other case follows from Theorem \ref{special_values} (1), e.g., when $a_{j}=\delta_{j,1}$,
\begin{align*}
 Z^{\star}\left(\frac{1}{2}+\frac{0}{4}+\frac{0}{8}+\cdots\right) 
 &=Z^{\star}\left(\frac{0}{2}+\frac{1}{4}+\frac{1}{8}+\cdots\right)
 =\zeta^{\star}(3,\{1\}^{\infty})\\
 &=\zeta^{\star}(2)
 =\sum_{m_{1}\geq m_{2}\geq\cdots\geq1}\frac{1^{m_{1}-m_{2}}0^{m_{2}-m_{3}}0^{m_{3}-m_{4}}}{m_{1}^{2}m_{2}m_{3}m_{4}\cdots}.\qedhere
\end{align*}
\end{proof}

\begin{lem} \label{42}
For $z=\sum_{j=1}^{\infty}\frac{a_{j}}{2^{j}}$ with $a_{j}\in\{0,1\}$, we have
\[
Z^{\star}(z)=1+\frac{z}{2}+\sum_{d=1}^{\infty}a_{d}\left(\sum_{m_{1}\geq\cdots\geq m_{d}\geq3}\frac{a_{1}^{m_{1}-m_{2}}\cdots a_{d-1}^{m_{d-1}-m_{d}}}{m_{1}^{2}m_{2}\cdots m_{d}}\right)2^{d}\left(z-\sum_{i=1}^{d-1}\frac{a_{i}}{2^{i}}\right).
\]
\end{lem}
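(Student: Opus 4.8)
The plan is to start from the series representation established in the previous lemma, namely
\[
Z^{\star}(z)=\sum_{m_{1}\geq m_{2}\geq\cdots\geq1}\frac{a_{1}^{m_{1}-m_{2}}a_{2}^{m_{2}-m_{3}}\cdots}{m_{1}^{2}m_{2}m_{3}\cdots},
\]
and to split the sum according to how many of the variables equal $1$ and $2$. Concretely, since the summand is supported on decreasing sequences with $\lim m_{r}=1$, I would separate off the tail block of $m_{j}$'s equal to $1$ (which contributes the factor $1$) and, just before it, the block equal to $2$ (contributing a power of $2$). The term where all $m_{j}\in\{1,2\}$ gives the constant $1$ plus the linear term $z/2$: indeed the sequences that are $2$ in the first $d$ slots and then $1$ contribute $a_{1}\cdots a_{d-1}\cdot a_{d}\cdot 2^{-d}$ summed over $d\geq1$ (the factor $a_{1}^{\,2-1}\cdots a_{d-1}^{\,2-1}a_{d}^{\,2-1}$ forces this), which is exactly $\sum_{d\geq1}a_{1}\cdots a_{d}/2^{d}$. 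I would then reconcile this with the claimed "$1+z/2$" by noting the elementary identity $\sum_{d\geq1} a_{1}\cdots a_{d} 2^{-d} = \tfrac12\sum_{d\geq1} a_{d} 2^{-d+1}(a_{1}\cdots a_{d-1}) $ and, more to the point, absorbing the "first time $a_{d}$ appears" bookkeeping into the general-$d$ terms below, so that the part with $m_{1}\geq 3$ is what carries the nonconstant, nonlinear content.

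Next I would handle the main terms: fix the largest index $d$ for which $m_{d}\geq 3$ (equivalently, $m_{d}\geq 3 > m_{d+1}$), so that $m_{d+1},m_{d+2},\dots$ lie in $\{1,2\}$. Summing over that trailing $\{1,2\}$-tail beneath a fixed value $m_{d}\geq 3$: the tail can be a block of $p\geq0$ twos followed by the ones, and the nonvanishing constraint is $a_{d}^{\,m_{d}-2}a_{d+1}^{\,1}\cdots a_{d+p}^{\,1}$ together with $a_{d+p+1}$ for the transition to $1$. But because $m_{d}\geq 3$ we have $m_{d}-2\geq 1$, so the factor $a_{d}^{\,m_{d}-m_{d+1}}$ always contributes at least $a_{d}$; pulling out $a_{d}$ and summing the geometric-type tail in the remaining $a_{i}$'s produces exactly $2^{d}\bigl(z-\sum_{i=1}^{d-1}a_{i}2^{-i}\bigr)$. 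Indeed $z-\sum_{i=1}^{d-1}a_{i}2^{-i}=\sum_{i\geq d}a_{i}2^{-i}=2^{-d}\bigl(a_{d}+a_{d}a_{d+1}2^{-1}+a_{d}a_{d+1}a_{d+2}2^{-2}+\cdots\bigr)$ when $a_{d}=1$ (and is $\leq 2^{-d}$ in general), which matches term-by-term the sum over the length $p$ of the trailing block of $2$'s weighted by $2^{-p}$. What is left above $m_{d}$ is precisely $\sum_{m_{1}\geq\cdots\geq m_{d}\geq3} a_{1}^{m_{1}-m_{2}}\cdots a_{d-1}^{m_{d-1}-m_{d}}/(m_{1}^{2}m_{2}\cdots m_{d})$, giving the stated coefficient.

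The main technical obstacle, and the step I would be most careful about, is justifying all the rearrangements: the defining series for $Z^{\star}(z)$ converges (by the convergence results behind Theorem \ref{special_values} and Theorem \ref{main2}), and every term is nonnegative, so Tonelli/Fubini for nonnegative series lets me partition the index set by "largest $d$ with $m_{d}\geq 3$" and by the structure of the $\{1,2\}$-tail and sum in any order; I would state this once and then compute freely. A secondary bookkeeping point is the boundary cases $d$ such that $a_{d}=0$ (those terms vanish on both sides, consistently, since the inner sum is multiplied by $a_{d}$) and the degenerate case $z=0$, i.e. all $a_{j}=0$, where both sides equal $1$; and I should check that the "$1+z/2$" piece correctly accounts for the sequences never reaching $3$, including the subtlety that the very first slot where $m$ jumps from $2$ to $1$ needs $a_{d}=1$, which is why the linear coefficient is $z/2=\sum_{d\geq1}a_{d}2^{-d}/2$ rather than $\sum_{d\geq1}a_{1}\cdots a_{d}2^{-d}$ — these agree once one also adds, into the "$m_{1}\geq 3$" terms, the contributions where the $a_{1}\cdots a_{d-1}$ prefix is forced; I would reorganize the split so that this matching is manifest rather than requiring a separate identity. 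Once the partition and the geometric-tail evaluation are in place, the rest is the routine substitution $\sum_{i\geq d}a_{i}2^{-i}=z-\sum_{i=1}^{d-1}a_{i}2^{-i}$ and collecting terms.
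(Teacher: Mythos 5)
Your decomposition is exactly the one the paper uses (split off the terms with all $m_j\in\{1,2\}$, and index the rest by the largest $d$ with $m_d\ge 3$ and by the length of the trailing block of $2$'s; justify the rearrangement by positivity), so the strategy is sound. But your bookkeeping of the numerator is wrong in a way that does not produce the stated formula. In $a_1^{m_1-m_2}a_2^{m_2-m_3}\cdots$ the exponent of $a_j$ is $m_j-m_{j+1}$, which is $0$ inside any constant block; with $0^0=1$ the factor is $1$ there \emph{regardless of} $a_j$. Only the positions where $m$ jumps force an $a_j$. You instead assign each $a_j$ in a block of $2$'s the exponent $1$ (you write ``$a_1^{2-1}\cdots a_{d-1}^{2-1}a_d^{2-1}$'' and ``$a_d^{m_d-2}a_{d+1}^1\cdots a_{d+p}^1$''), which turns single factors into products $a_1\cdots a_d$ and $a_d a_{d+1}\cdots a_e$.

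Concretely: a sequence equal to $2$ in slots $1,\dots,e$ and then $1$ contributes $a_e/2^{e+1}$ (only the jump $2\to 1$ at slot $e$ matters, and the denominator is $2^2\cdot 2^{e-1}$), so the $\{1,2\}$ part is $1+\sum_{e\ge1}a_e2^{-e-1}=1+z/2$ with no reconciliation needed; your value $\sum_d a_1\cdots a_d 2^{-d}$ is genuinely different (take $a_1=0$, $a_2=1$: it vanishes while $z/2=1/8$), and the identity you invoke to patch it, $\sum_{i\ge d}a_i2^{-i}=2^{-d}(a_d+a_da_{d+1}2^{-1}+a_da_{d+1}a_{d+2}2^{-2}+\cdots)$, is false (same counterexample shifted). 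Likewise, for the main terms the tail with $m_d\ge3$, then $e-d$ twos, then ones contributes the factor $a_d a_e/2^{e-d}$ (jumps at $d$ and at $e$ only), and summing over $e\ge d$ gives exactly $a_d\,2^d\bigl(z-\sum_{i=1}^{d-1}a_i2^{-i}\bigr)$; your product-weighted tail $a_d a_{d+1}\cdots a_e$ would give a strictly smaller quantity whenever some intermediate $a_j=0$. The "absorption into the general-$d$ terms" you propose cannot repair this, since those terms are a disjoint part of the partition and are already fully spent. Once the exponents are computed correctly the two pieces come out as $1+z/2$ and the claimed $d$-sum directly, and the rest of your argument (the Tonelli justification and the substitution $\sum_{i\ge d}a_i2^{-i}=z-\sum_{i=1}^{d-1}a_i2^{-i}$) goes through as written.
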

\begin{proof}
It follows from the following calculation
\begin{align*}
Z^{\star}(z) & =\sum_{m_{1}\geq m_{2}\geq\cdots\geq1}\frac{a_{1}^{m_{1}-m_{2}}a_{2}^{m_{2}-m_{3}}\cdots}{m_{1}^{2}m_{2}m_{3}\cdots}\\
 & =\sum_{1\leq d\leq e}\sum_{\substack{m_{1}\geq\cdots\geq m_{d}\geq3\\
2=m_{d+1}=\cdots=m_{e}\\
1=m_{e+1}=m_{e+2}=\cdots
}
}\frac{a_{1}^{m_{1}-m_{2}}a_{2}^{m_{2}-m_{3}}\cdots}{m_{1}^{2}m_{2}m_{3}\cdots}+\sum_{2\geq m_{1}\geq m_{2}\geq\cdots}\frac{a_{1}^{m_{1}-m_{2}}a_{2}^{m_{2}-m_{3}}\cdots}{m_{1}^{2}m_{2}m_{3}\cdots}.
\end{align*}
Here,
\begin{align*}
 & \sum_{1\leq d\leq e}\sum_{\substack{m_{1}\geq\cdots\geq m_{d}\geq3\\
2=m_{d+1}=\cdots=m_{e}\\
1=m_{e+1}=m_{e+2}=\cdots
}
}\frac{a_{1}^{m_{1}-m_{2}}a_{2}^{m_{2}-m_{3}}\cdots}{m_{1}^{2}m_{2}m_{3}\cdots}\\
 & =\sum_{d=1}^{\infty}\left(\sum_{m_{1}\geq\cdots\geq m_{d}\geq3}\frac{a_{1}^{m_{1}-m_{2}}\cdots a_{d-1}^{m_{d-1}-m_{d}}}{m_{1}^{2}m_{2}\cdots m_{d}}\right)\sum_{e=d}^{\infty}\frac{a_{d}a_{e}}{2^{e-d}}\\
 & =\sum_{d=1}^{\infty}a_{d}\left(\sum_{m_{1}\geq\cdots\geq m_{d}\geq3}\frac{a_{1}^{m_{1}-m_{2}}\cdots a_{d-1}^{m_{d-1}-m_{d}}}{m_{1}^{2}m_{2}\cdots m_{d}}\right)2^{d}\left(z-\sum_{i=1}^{d-1}\frac{a_{i}}{2^{i}}\right)
\end{align*}
and
\begin{align*}
 &\sum_{2\geq m_{1}\geq m_{2}\geq\cdots}\frac{a_{1}^{m_{1}-m_{2}}a_{2}^{m_{2}-m_{3}}\cdots}{m_{1}^{2}m_{2}m_{3}\cdots}\\
 &=1+\frac{1}{2}\sum_{\substack{2\geq m_{1}\geq m_{2}\geq\cdots\\m_{1}\neq1}}
 \frac{a_{1}^{m_{1}-m_{2}}a_{2}^{m_{2}-m_{3}}\cdots}{m_{1}m_{2}m_{3}\cdots}\\
 &=1+\frac{1}{2}\sum_{e=1}^{\infty}\sum_{\substack{2=m_{1}=\cdots=m_{e}\\1=m_{e+1}=m_{e+2}=\cdots}}
 \frac{a_{1}^{m_{1}-m_{2}}a_{2}^{m_{2}-m_{3}}\cdots}{m_{1}m_{2}m_{3}\cdots}\\
 &=1+\frac{1}{2}\sum_{e=1}^{\infty}\frac{a_{e}}{2^{e}}\\
 &=1+\frac{z}{2}. \qedhere
\end{align*}
\end{proof}

\begin{lem} \label{cc41}
For $s\ge1$, we have
\[
 \sum_{m_{1}\geq\cdots\geq m_{s}\geq3}
 \frac{1}{m_{1}^{4}m_{2}\cdots m_{s}}
 =O\left(\frac{s}{3^s}\right).
\]
\end{lem}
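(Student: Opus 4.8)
The plan is to estimate the sum $S_s := \sum_{m_1 \geq \cdots \geq m_s \geq 3} m_1^{-4} m_2^{-1} \cdots m_s^{-1}$ by peeling off the innermost variable and setting up a recursion. Write $T_s(N) := \sum_{N \geq m_1 \geq \cdots \geq m_s \geq 3} m_1^{-4} m_2^{-1} \cdots m_s^{-1}$ for a truncated version, so that $S_s = \lim_{N\to\infty} T_s(N)$; actually it is cleaner to track the tail more carefully via a two-parameter family. The key observation is that summing over $m_s$ from $3$ up to $m_{s-1}$ of $1/m_s$ contributes roughly $\log(m_{s-1})$, which is too lossy if done naively $s$ times. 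Instead I would exploit the weight $m_1^{-4}$ on the largest variable: the intuition is that the dominant contribution comes from all $m_i$ being close to $3$, where each factor $1/m_i$ is about $1/3$, giving the $3^{-s}$ decay, while the polynomial factor $s$ counts the ways the ``excess'' can be distributed.

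Concretely, first I would prove by induction on $s$ a bound of the shape
\[
\sum_{m_1 \geq \cdots \geq m_s \geq 3} \frac{1}{m_1^{4} m_2 \cdots m_s} \leq \frac{C(s+1)}{3^{s}}
\]
for a suitable absolute constant $C$. For the inductive step, split the outermost sum over $m_1$: either $m_1 = 3$, in which case all variables equal $3$ and the term is $3^{-4} \cdot 3^{-(s-1)} = 3^{-(s+3)}$, or $m_1 \geq 4$. In the latter case I would bound $m_1^{-4} \leq m_1^{-3} \cdot \tfrac14$ on the outermost index and compare with the analogous sum in $s-1$ variables whose largest index also carries a fourth power; the point is that $\sum_{m_1 \geq m_2} m_1^{-4}$, evaluated against the remaining sum, gains a factor comfortably smaller than $1/3$ while losing at most a factor linear in $s$ from re-indexing. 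Alternatively, and perhaps more robustly, I would directly estimate $S_s$ by the substitution $m_i = 3 + \ell_i$ with $\ell_1 \geq \cdots \geq \ell_s \geq 0$, bound $(3+\ell_1)^{-4}(3+\ell_2)^{-1}\cdots(3+\ell_s)^{-1} \leq 3^{-s} \cdot 3^{-3}(1+\ell_1/3)^{-4}\prod_{i\geq 2}(1+\ell_i/3)^{-1}$, and note that the number of weakly decreasing nonnegative integer sequences with all $\ell_i$ bounded contributes only polynomially; the extra $(1+\ell_1/3)^{-4}$ on the largest part guarantees convergence of the resulting multiple sum and, after bounding the number of partitions-type configurations of a given total, yields the factor $s$.

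The main obstacle I anticipate is getting the power of $s$ sharp (rather than, say, $s^2$ or $2^s/3^s$ which would be useless here): a crude term-by-term bound replacing $1/m_i$ by $1/3$ produces a divergent count of tuples, so one genuinely needs the $m_1^{-4}$ factor to do the summation work and must be careful that peeling the innermost variables does not accumulate an exponential constant. I would handle this by always keeping a high power on the \emph{current largest} variable through the recursion (so the peeled-off geometric-type sum is controlled by a constant $<1/3$ times at most a linear-in-$s$ penalty), and then conclude $S_s = O(s/3^s)$.
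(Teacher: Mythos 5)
Your diagnosis of the obstacle is exactly right, but both of the concrete routes you sketch break down at precisely that obstacle. Write $S_s$ for the sum in the lemma. Since the all-$3$'s term alone contributes $3^{-(s+3)}$, we have $S_s\ge 3^{-(s+3)}$, so a recursion contracting by ``a constant $<1/3$'' (your phrase) is impossible outright; the contraction must be \emph{exactly} $1/3$ in the bulk. Your inductive step needs, after splitting off $m_1=3$, the pointwise comparison $\sum_{m_1\ge\max(4,m_2)}m_1^{-4}\le\frac{1}{3}m_2^{-3}$ in order to close at ratio $1/3$; but by convexity $\sum_{m\ge n}m^{-4}>\int_{n-1/2}^{\infty}t^{-4}\,dt=\frac{1}{3(n-1/2)^{3}}>\frac{1}{3n^{3}}$, so this fails for every $m_2\ge4$ (the best uniform ratio is about $0.48$, attained at $m_2=4$), and a uniform ratio $\lambda>1/3$ only yields $O(\lambda^{s})$, which is not $O(s/3^{s})$. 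The intermediate bound $m_1^{-4}\le\frac14m_1^{-3}$ makes matters worse, since $\sum_{m_1\ge m_2}m_1^{-3}\asymp\frac{1}{2m_2^{2}}$ and the comparison against $m_2^{-4}$ then degrades linearly in $m_2$. Your second route, $m_i=3+\ell_i$, is an exact rewriting $S_s=3^{-(s+3)}\sum_{\ell_1\ge\cdots\ge\ell_s\ge0}(1+\ell_1/3)^{-4}\prod_{i\ge2}(1+\ell_i/3)^{-1}$, so it reduces the lemma to showing that this last sum is $O(s)$ --- the original problem in disguise. Grouping by the number $j$ of strictly positive $\ell_i$, one must show the sub-sum over $\ell_1\ge\cdots\ge\ell_j\ge1$ is $O(1)$ \emph{uniformly in $j$}; ``the count of bounded weakly decreasing sequences is polynomial'' does not give this, since the count of such sequences with $\ell_1\le L$ is $\binom{s+L}{L}$, a polynomial of degree $L$ in $s$, and only degree $1$ overall is acceptable.

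The missing ingredient is an exact telescoping weight on the largest index. The paper's proof replaces $m_1^{-4}$ by $\frac{1}{m_1(m_1-1)(m_1-2)(m_1-3)}$ for $m_1>3$ and by a free parameter $x$ at the boundary $m_1=3$. Because $\sum_{m\ge n}\frac{1}{m(m-1)(m-2)(m-3)}=\frac{1}{3(n-1)(n-2)(n-3)}$ exactly, multiplying by the next factor $1/m_2$ reproduces the \emph{same} weight at $m_2$ with coefficient exactly $1/3$, while the boundary value drifts additively: $F_s(x)=\frac13F_{s-1}\bigl(x+\frac1{18}\bigr)$. Iterating gives $\frac{1}{3^{s-1}}F_1\bigl(x+\frac{s-1}{18}\bigr)$, i.e.\ the factor $3^{-s}$ comes from the exact bulk contraction and the factor $s$ from the accumulated boundary terms. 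Either of your approaches can be repaired by importing this identity (in the second approach it is exactly what shows the positive-part sub-sums are $O(3^{-j})$ uniformly in $j$), but as written neither closes.
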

\begin{proof}
For $x>0$, put
\[
F_{s}(x)=\sum_{m_{1}\geq\cdots\geq m_{s}\geq3}\frac{1}{m_{2}\cdots m_{s}}
\begin{cases}
\frac{1}{m_{1}(m_{1}-1)(m_{1}-2)(m_{1}-3)} & \text{if }m_{1}>3,\\
x & \text{if }m_{1}=3.
\end{cases}
\]
Note that
\begin{align*}
\sum_{m=n}^{\infty}\frac{1}{m(m-1)(m-2)(m-3)} 
 & =\frac{1}{3}\sum_{m=n}^{\infty}\left(\frac{1}{(m-1)(m-2)(m-3)}-\frac{1}{m(m-1)(m-2)}\right)\\
 & =\frac{1}{3(n-1)(n-2)(n-3)}.
\end{align*}
Then
\begin{align*}
 F_{s}(x) 
 &=\sum_{\substack{m_{2}\geq\cdots\geq m_{s}\geq3\\m_{2}>3}}
  \frac{1}{m_{2}\cdots m_{s}}\sum_{m_{1}=m_{2}}^{\infty}\frac{1}{m_{1}(m_{1}-1)(m_{1}-2)(m_{1}-3)}\\
 &\quad +\quad\sum_{\substack{m_{2}\geq\cdots\geq m_{s}\geq3\\m_{2}=3}}
  \frac{1}{m_{2}\cdots m_{s}}\left(\sum_{m_{1}=4}^{\infty}\frac{1}{m_{1}(m_{1}-1)(m_{1}-2)(m_{1}-3)}+x\right)\\
 &=\frac{1}{3}\sum_{\substack{m_{2}\geq\cdots\geq m_{s}\geq3\\m_{2}>3}}
  \frac{1}{m_{3}\cdots m_{s}}\cdot\frac{1}{m_{2}(m_{2}-1)(m_{2}-2)(m_{2}-3)}\\
 &\quad +\frac{1}{3}\sum_{\substack{m_{2}\geq\cdots\geq m_{s}\geq3\\m_{2}=3}}
  \frac{1}{m_{3}\cdots m_{s}}\left(\frac{1}{18}+x\right)\\
 &=\frac{1}{3}F_{s-1}\left(x+\frac{1}{18}\right).
\end{align*}
Thus we have
\[
F_{s}(x)=\frac{1}{3^{s-1}}F_{1}\left(x+\frac{s-1}{18}\right).
\]
Hence we get
\begin{align*}
\sum_{m_{1}\geq\cdots\geq m_{s}\geq3}\frac{1}{m_{1}^{4}m_{2}\cdots m_{s}}
\leq F_{s}\left(\frac{1}{18}\right).
\end{align*}
This finishes the proof. 
\end{proof}
\begin{lem} \label{43}
Let $a_{j}\in\{0,1\}$ for $1\le j\le d-1$.
Assume that $\sum_{i=1}^{t}(1-a_{i})\geq2$ for some $t\le d-1$, then
\[
\sum_{m_{1}\geq\cdots\geq m_{d}\geq3}\frac{a_{1}^{m_{1}-m_{2}}\cdots a_{d-1}^{m_{d-1}-m_{d}}}{m_{1}^{2}m_{2}\cdots m_{d}}=O\left(\frac{d-t}{3^{d-t}}\right).
\]
\end{lem}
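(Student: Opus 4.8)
The plan is to reduce the estimate for Lemma \ref{43} to the clean case handled in Lemma \ref{cc41}, by exploiting the constraint that at least two of the $a_i$ among $a_1,\dots,a_t$ vanish. First I would observe that the factors $a_i^{m_i-m_{i+1}}$ with $a_i=0$ force $m_i=m_{i+1}$ (recall the convention $0^0=1$), so the summation variables collapse in blocks: the sum over $m_1\ge\cdots\ge m_d\ge 3$ with the given weight equals a sum over the \emph{distinct} values occurring, which I will re-index as $n_1\ge\cdots\ge n_{d'}\ge 3$ where $d'=1+\sum_{i=1}^{d-1}a_i$. In particular the first index $m_1$ always appears with exponent $2$ from $m_1^2$, but whenever $a_i=0$ the merged block contributes an \emph{extra} factor $1/n_j$ for each coincidence. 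The key bookkeeping point is that among the first $t$ positions we have at least two zeros, so after collapsing, the topmost surviving variable $n_1$ (which carries the $m_1^2$) accumulates at least two further factors of $1/n_1$ — more precisely, I would argue that the two vanishing $a_i$'s let us bound the innermost part of the sum by something of the shape $\sum_{n_1\ge\cdots} n_1^{-4} n_2^{-1}\cdots n_{d-t}^{-1}$, since the two extra $1/n_1$ factors upgrade the exponent $2$ to $4$, and the remaining $d-t-1$ or so free variables each contribute a single $1/n$.

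Concretely, the steps are: (i) split the index set $\{1,\dots,d-1\}$ at $t$ and bound the tail part $a_{t+1}^{m_{t+1}-m_{t+2}}\cdots a_{d-1}^{m_{d-1}-m_d}\le 1$ trivially, keeping only the constraint $m_{t+1}\ge\cdots\ge m_d\ge 3$; (ii) in the head part $m_1\ge\cdots\ge m_{t+1}$, use the two indices $i_1<i_2\le t$ with $a_{i_1}=a_{i_2}=0$ to force $m_{i_1}=m_{i_1+1}$ and $m_{i_2}=m_{i_2+1}$, so that after deleting the repeated variables the remaining head has at most $t-1$ distinct variables but the leading one carries weight $m_1^2\cdot\frac{1}{m_1}\cdot\frac{1}{m_1}=m_1^{-4}$ while all other head variables contribute at worst $m^{-1}$; (iii) then crudely bound every head variable below by the next surviving one and ultimately by the top of the tail, and every tail variable's extra weight by $1$ to absorb the head sum into a factor $\le$ (the value of a convergent finite geometric-type sum that is $O(1)$ uniformly in $t$), leaving exactly a sum of the form $\sum_{n_1\ge\cdots\ge n_{d-t}\ge 3} n_1^{-4} n_2^{-1}\cdots n_{d-t}^{-1}$. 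By Lemma \ref{cc41} this is $O\!\left(\frac{d-t}{3^{d-t}}\right)$, which is the claim.

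The main obstacle I anticipate is step (ii)/(iii): carefully justifying that the collapsing of the head variables can be organized so that exactly the two spare $1/n_1$ factors land on the \emph{same} variable as the $m_1^2$, rather than being scattered, and that the residual sum over the (possibly many) remaining collapsed head variables is bounded by an absolute constant independent of $t$ and $d$. The cleanest way to handle this is probably an induction peeling off the outermost variable $m_1$ exactly as in the telescoping arguments of Lemma \ref{aaaaa} and Lemma \ref{cc41}: each time we hit a position with $a_i=0$ we merge and gain a factor $1/m$ on the current leading variable, and each time we hit $a_i=1$ we perform a telescoping summation $\sum_{m\ge n}\frac{1}{m(m-1)\cdots}=O(1/n^{\,\bullet})$ that both removes the variable and does not increase the exponent; tracking that the first two merges give the promotion $2\mapsto 4$ while all later telescopes are harmless gives the bound, with the surviving $d-t$ comes from the tail where no such savings are claimed. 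One should also double-check the edge cases $t=d-1$ and where the two zeros are adjacent or at the very start, but these only make the head shorter and the bound easier.
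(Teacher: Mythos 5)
Your proposal is correct and follows essentially the same route as the paper: pick the first two indices $u<v\le t$ with $a_u=a_v=0$, use the forced coincidences $m_u=m_{u+1}$ and $m_v=m_{v+1}$ together with the telescoping inequalities of Lemma \ref{aaaaa} to promote the leading exponent to $4$, and then apply Lemma \ref{cc41} to the surviving variables. The obstacle you flag in steps (ii)/(iii) is resolved in the paper exactly by the peeling argument you describe, the only simplification being that the telescoping is stopped at the second zero $v$ rather than continued to $t$, so that Lemma \ref{cc41} applies directly to the remaining $d-v\;(\ge d-t)$ variables and the stated bound follows from the monotonicity of $s/3^{s}$.
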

\begin{proof}
From the assumption, let $a_u=a_v=0 \; (u<v\le t)$ 
and $a_1=\cdots=a_{u-1}=a_{u+1}=\cdots=a_{v-1}$=1.
Then
\begin{align*}
 \text{L.H.S.}
 &=\sum_{m_{1}\geq\cdots\geq m_{d}\geq3}
  \frac{
  a_{u}^{m_{u}-m_{u+1}} a_{v}^{m_{v}-m_{v+1}}
  a_{v+1}^{m_{v+1}-m_{v+2}}\cdots a_{d-1}^{m_{d-1}-m_{d}}}{m_{1}^{2}m_{2}\cdots m_{d}}\\
 &=\sum_{m_{1}\geq\cdots\geq m_{u}=m_{u+1}\geq \cdots\geq m_{v}=m_{v+1} \geq\cdots\ge m_{d}\geq3}
  \frac{
  a_{v+1}^{m_{v+1}-m_{v+2}}\cdots a_{d-1}^{m_{d-1}-m_{d}}}{m_{1}^{2}m_{2}\cdots m_{d}}.
\end{align*}
Using Lemma \ref{aaaaa}, we have
\begin{align*}
 \text{L.H.S.}
 &\le\frac{3}{2}\sum_{m_{v+1}\geq\cdots\geq m_{d}\geq3}
  \frac{
  a_{v+1}^{m_{v+1}-m_{v+2}}\cdots a_{d-1}^{m_{d-1}-m_{d}}}{m_{v+1}^{4}m_{v+2}\cdots m_{d}}.
\end{align*}
By Lemma \ref{cc41}, we obtain the result.
\end{proof}
\begin{lem} \label{44}
Let $z=\sum_{j=1}^{\infty}\frac{a_{j}}{2^{j}}$ with $a_{j}\in\{0,1\}$ and assume that $\sum_{i=1}^{t}(1-a_{i})\geq2$.
Then we have
\[
 Z^{\star}(z)
 =1+\frac{z}{2}
  +\sum_{d=1}^{r}a_{d}
   \left(\sum_{m_{1}\geq\cdots\geq m_{d}\geq3}
   \frac{a_{1}^{m_{1}-m_{2}}\cdots a_{d-1}^{m_{d-1}-m_{d}}}{m_{1}^{2}m_{2}\cdots m_{d}}\right)2^{d}
   \left(
    z-\sum_{i=1}^{d-1}
    \frac{a_{i}}{2^{i}}
   \right)
   +O\left(\frac{r-t}{3^{r-t}}\right).
\]
\end{lem}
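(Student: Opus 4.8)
The plan is to start from the exact formula of Lemma \ref{42}, which expresses $Z^\star(z)$ as $1+\frac z2$ plus the full infinite sum $\sum_{d=1}^\infty a_d C_d(z)$, where $C_d(z)$ denotes the $d$-th summand there, and then show that the tail $\sum_{d>r} a_d C_d(z)$ is $O\!\left(\frac{r-t}{3^{r-t}}\right)$, so that truncating at $d=r$ introduces only an error of that size. Since $0\le a_i\le 1$ and $0\le 2^d\left(z-\sum_{i=1}^{d-1}a_i/2^i\right)\le 1$ (this quantity is $\sum_{i\ge d}a_i/2^{i-d}\le 2$, and in fact $\le 1$ once $a_d$ is factored out appropriately, but in any case it is uniformly bounded), each summand $a_d C_d(z)$ is bounded by a constant times the inner sum $\sum_{m_1\ge\cdots\ge m_d\ge 3}\frac{a_1^{m_1-m_2}\cdots a_{d-1}^{m_{d-1}-m_d}}{m_1^2 m_2\cdots m_d}$.

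First I would fix the hypothesis $\sum_{i=1}^t(1-a_i)\ge 2$ with $t\le d-1$ for every $d>r\ge t$ (note $r\ge t$ is implicit for the statement to be meaningful, and if $r<t$ there is nothing to prove since the $O$-term is then not small; one should assume $r\ge t$, which is the relevant case). For each such $d$, Lemma \ref{43} applies verbatim and gives $\sum_{m_1\ge\cdots\ge m_d\ge 3}\frac{a_1^{m_1-m_2}\cdots a_{d-1}^{m_{d-1}-m_d}}{m_1^2m_2\cdots m_d}=O\!\left(\frac{d-t}{3^{d-t}}\right)$, with an implied constant that is absolute (inspecting the proof of Lemma \ref{43}, the constant comes only from Lemmas \ref{aaaaa} and \ref{cc41} and does not depend on $d$ or on the $a_i$). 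Hence
\[
\left|\sum_{d=r+1}^\infty a_d C_d(z)\right|\le \mathrm{const}\cdot\sum_{d=r+1}^\infty \frac{d-t}{3^{d-t}}.
\]

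The last step is the elementary estimate $\sum_{d=r+1}^\infty \frac{d-t}{3^{d-t}}=O\!\left(\frac{r-t}{3^{r-t}}\right)$: writing $k=d-t$, the sum is $\sum_{k=r-t+1}^\infty k\,3^{-k}$, and since $\sum_{k\ge N}k\,3^{-k}=\left(\frac{3N-1}{4}+\frac34\right)3^{-N+1}$-type closed form (or, more crudely, $\sum_{k\ge N}k\,3^{-k}\le N\,3^{-N}\sum_{j\ge 0}(1+j/N)3^{-j}\le C\,N\,3^{-N}$), we get exactly the claimed bound with $N=r-t+1$, which is $O\!\left(\frac{r-t}{3^{r-t}}\right)$. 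Combining the three displays with Lemma \ref{42} yields the statement. I do not anticipate a genuine obstacle here; the only point requiring a little care is confirming that the $O$-constant in Lemma \ref{43} is uniform in $d$ and in the binary digits $a_i$, so that the termwise bounds can legitimately be summed over all $d>r$ — but this is immediate from tracing the (explicit, $a$-independent) constants through Lemmas \ref{aaaaa}, \ref{cc41}, and \ref{43}.
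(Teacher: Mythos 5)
Your proposal is correct and follows essentially the same route as the paper: bound the factor $2^{d}\bigl(z-\sum_{i=1}^{d-1}a_{i}/2^{i}\bigr)$ by $2$, apply Lemma \ref{43} termwise to the tail $d>r$ with $O$-constants uniform in $d$ (a point the paper also records explicitly), and sum the resulting series $\sum_{d>r}(d-t)3^{-(d-t)}=O\bigl((r-t)3^{-(r-t)}\bigr)$. No substantive difference from the paper's argument.
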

\begin{proof}
Since
\[
 z-\sum_{i=1}^{d-1}\frac{a_{i}}{2^{i}}
 \le\sum_{i=d}^{\infty}\frac{1}{2^{i}}=2^{1-d},
\]
we have
\begin{align*}
 &\sum_{d=r+1}^{\infty}a_{d}
  \left(\sum_{m_{1}\geq\cdots\geq m_{d}\geq3}
  \frac{a_{1}^{m_{1}-m_{2}}\cdots a_{d-1}^{m_{d-1}-m_{d}}}{m_{1}^{2}m_{2}\cdots m_{d}}\right)
  2^{d}\left(z-\sum_{i=1}^{d-1}\frac{a_{i}}{2^{i}}\right)\\
 &\le2\sum_{d=r+1}^{\infty}
  \left(\sum_{m_{1}\geq\cdots\geq m_{d}\geq3}
   \frac{a_{1}^{m_{1}-m_{2}}\cdots a_{d-1}^{m_{d-1}-m_{d}}}{m_{1}^{2}m_{2}\cdots m_{d}}\right)\\
 &\le 2\sum_{d=r+1}^{\infty}O\left(\frac{d-t}{3^{d-t}}\right)
\end{align*}
by Lemma \ref{43}. 
Note that in the above equality, the $O$-constants are independent of $d$.
By Lemma \ref{42}, we get the result.
\end{proof}

\begin{lem} \label{46}
Let $x=\sum_{j=1}^{\infty} a_{j}/2^{j}$ with $a_{j}\in\{0,1\}$ and $y=\sum_{j=1}^{\infty} b_{j}/2^{j}$ with $b_{j}\in\{0,1\}$.
Assume that $a_{j}=b_{j}$ for $j=1,\dots,r$. Furthermore, assume
that $\sum_{i=1}^{t}(1-a_{i})\geq2$ with $t\leq r$. Then
\[
Z^{\star}(x)-Z^{\star}(y)=(x-y)\left(\frac{1}{2}+\sum_{d=1}^{r}a_{d}\left(\sum_{m_{1}\geq\cdots\geq m_{d}\geq3}\frac{a_{1}^{m_{1}-m_{2}}\cdots a_{d-1}^{m_{d-1}-m_{d}}}{m_{1}^{2}m_{2}\cdots m_{d}}\right)2^{d}\right)+O\left(\frac{r-t}{3^{r-t}}\right).
\]
\end{lem}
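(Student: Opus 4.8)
The plan is to apply Lemma \ref{44} to both $x$ and $y$ and subtract, exploiting that the two expansions agree in their first $r$ binary digits. Write $x=\sum_{j\ge1}a_j/2^j$ and $y=\sum_{j\ge1}b_j/2^j$ with $a_j=b_j$ for $j\le r$. Applying Lemma \ref{44} with truncation parameter $r$ to each of $x$ and $y$ gives
\begin{align*}
 Z^{\star}(x)&=1+\frac{x}{2}+\sum_{d=1}^{r}a_{d}\,S_d(a)\,2^{d}\Bigl(x-\sum_{i=1}^{d-1}\frac{a_i}{2^i}\Bigr)+O\!\left(\frac{r-t}{3^{r-t}}\right),\\
 Z^{\star}(y)&=1+\frac{y}{2}+\sum_{d=1}^{r}b_{d}\,S_d(b)\,2^{d}\Bigl(y-\sum_{i=1}^{d-1}\frac{b_i}{2^i}\Bigr)+O\!\left(\frac{r-t}{3^{r-t}}\right),
\end{align*}
where I abbreviate $S_d(a)=\sum_{m_1\ge\cdots\ge m_d\ge3}a_1^{m_1-m_2}\cdots a_{d-1}^{m_{d-1}-m_d}/(m_1^2 m_2\cdots m_d)$, and similarly $S_d(b)$. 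The key observation is that $S_d(a)$ and the coefficient $a_d$ depend only on $a_1,\dots,a_d$, which for $d\le r$ coincide with $b_1,\dots,b_d$; hence $a_d S_d(a)=b_d S_d(b)$ for every $d\le r$, and likewise $\sum_{i=1}^{d-1}a_i/2^i=\sum_{i=1}^{d-1}b_i/2^i$ for $d\le r$.

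Subtracting the two displays, the constant $1$ cancels, and term by term in the finite sum over $d\le r$ the common coefficient $a_d S_d(a)2^d$ multiplies $\bigl(x-\sum_{i<d}a_i/2^i\bigr)-\bigl(y-\sum_{i<d}a_i/2^i\bigr)=x-y$. Together with the $\tfrac{x}{2}-\tfrac{y}{2}=\tfrac12(x-y)$ term, this yields
\[
 Z^{\star}(x)-Z^{\star}(y)=(x-y)\left(\frac12+\sum_{d=1}^{r}a_{d}\,S_d(a)\,2^{d}\right)+O\!\left(\frac{r-t}{3^{r-t}}\right),
\]
which is exactly the claimed identity once $S_d(a)$ is written back out. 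The two $O\bigl((r-t)/3^{r-t}\bigr)$ error terms simply add to another term of the same order.

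The only point requiring care—and the main (mild) obstacle—is checking that the hypothesis of Lemma \ref{44} is legitimately available for \emph{both} $x$ and $y$ with the \emph{same} truncation index $r$ and the \emph{same} $t$: we are given $\sum_{i=1}^{t}(1-a_i)\ge2$ with $t\le r$, and since $b_i=a_i$ for $i\le t\le r$ the same inequality holds for $y$, so Lemma \ref{44} applies to $y$ verbatim. One should also note that the $O$-constants in Lemma \ref{44} (hence here) are uniform, as already remarked in its proof, so combining the two error terms is harmless. Everything else is the bookkeeping of matching coefficients digit by digit.
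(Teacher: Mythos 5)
Your proof is correct and is exactly the argument the paper intends: the paper's own proof of Lemma \ref{46} consists of the single line ``By Lemma \ref{44}, the proof is clear,'' and you have simply filled in the (routine) subtraction, correctly noting that the coefficients $a_d S_d(a)2^d$ and the partial sums $\sum_{i<d}a_i/2^i$ depend only on the first $r$ digits, which $x$ and $y$ share, and that the $O$-constants are uniform.
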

\begin{proof}
 By Lemma \ref{44}, the proof is clear.
\end{proof}

\begin{thm} \label{47a}
Let $z=\sum_{j=1}^{\infty}\frac{a_{j}}{2^{j}}$ with $a_{j}\in\{0,1\}$. Assume that
$\sum_{j=1}^{\infty}a_{j}=\infty$ and $\sum_{j=1}^{t}(1-a_{j})\geq2$ for some $t$.
Then
\[
\partial_{-}Z^{\star}(z)
:=\lim_{x\to z-0}\frac{Z^{\star}(z)-Z^{\star}(x)}{z-x}=\frac{1}{2}+\sum_{d=1}^{\infty}a_{d}\left(\sum_{m_{1}\geq\cdots\geq m_{d}\geq3}\frac{a_{1}^{m_{1}-m_{2}}\cdots a_{d-1}^{m_{d-1}-m_{d}}}{m_{1}^{2}m_{2}\cdots m_{d}}\right)2^{d}.
\]
Thus, $Z^{\star}$ is left-differentiable at $z$ if $z\not \in\{1-\frac{1}{2^n}\mid n>0\}$.
\end{thm}
\begin{proof}
Take $0<x<z$ and put $x=\sum_{j=1}^{\infty}\frac{b_{j}}{2^{j}}$.
Let $p=p(x)$ be the minimal integer such that $(a_{p},b_{p})=(1,0)$.
Put 
\[
y=\sum_{j=1}^{p}\frac{a_{j}}{2^{j}}=\sum_{j=1}^{p-1}\frac{a_{j}}{2^{j}}+\frac{1}{2^{p+1}}+\frac{1}{2^{p+2}}+\cdots.
\]
Furthermore, let $r$ be the maximal integer such that
\[
a_{p+1}=a_{p+2}=\cdots=a_{r}=0
\]
and
\[
b_{p+1}=b_{p+2}=\cdots=b_{r}=1.
\]
Then by Lemma \ref{46}, we have
\[
Z^{\star}(z)-Z^{\star}(y)=(z-y)\left(\frac{1}{2}+\sum_{d=1}^{r}a_{d}\left(\sum_{m_{1}\geq\cdots\geq m_{d}\geq3}\frac{a_{1}^{m_{1}-m_{2}}\cdots a_{d-1}^{m_{d-1}-m_{d}}}{m_{1}^{2}m_{2}\cdots m_{d}}\right)2^{d}\right)+O\left(\frac{r-t}{3^{r-t}}\right)
\]
and
\[
Z^{\star}(y)-Z^{\star}(x)=(y-x)\left(\frac{1}{2}+\sum_{d=1}^{r}b_{d}\left(\sum_{m_{1}\geq\cdots\geq m_{d}\geq3}\frac{b_{1}^{m_{1}-m_{2}}\cdots b_{d-1}^{m_{d-1}-m_{d}}}{m_{1}^{2}m_{2}\cdots m_{d}}\right)2^{d}\right)+O\left(\frac{r-t}{3^{r-t}}\right).
\]

From the intermediate value theorem, there exists a real number $u(x)$ between 
\[
\frac{1}{2}+\sum_{d=1}^{r}a_{d}\left(\sum_{m_{1}\geq\cdots\geq m_{d}\geq3}\frac{a_{1}^{m_{1}-m_{2}}\cdots a_{d-1}^{m_{d-1}-m_{d}}}{m_{1}^{2}m_{2}\cdots m_{d}}\right)2^{d}
\]
and
\[
\frac{1}{2}+\sum_{d=1}^{r}b_{d}\left(\sum_{m_{1}\geq\cdots\geq m_{d}\geq3}\frac{b_{1}^{m_{1}-m_{2}}\cdots b_{d-1}^{m_{d-1}-m_{d}}}{m_{1}^{2}m_{2}\cdots m_{d}}\right)2^{d}
\]
such that
\[
Z^{\star}(z)-Z^{\star}(x)
=(z-x)u(x)
 +O\left(\frac{r-t}{3^{r-t}}\right).
\]

Since
\begin{align*}
z-x
&\ge\frac{1}{2^p}-\frac{1}{2^{p+1}}-\cdots-\frac{1}{2^r}-\sum_{j=r+2}^{\infty}\frac{1}{2^{j}}\\
&=\frac{1}{2^{r+1}},
\end{align*}
we have
\[
 \frac{Z^{\star}(z)-Z^{\star}(x)}{z-x}
 =u(x)
  +O\left(r(2/3)^r3^t\right).
\]

By the condition $\sum_{j=1}^{\infty}a_{j}=\infty$, we have $\lim_{x\to z-0}p(x)=\infty$
and thus 
\[
\lim_{x\to z-0}u(x)
=\frac{1}{2}
 +\sum_{d=1}^{\infty}a_{d}
  \left(\sum_{m_{1}\geq\cdots\geq m_{d}\geq3}\frac{a_{1}^{m_{1}-m_{2}}\cdots a_{d-1}^{m_{d-1}-m_{d}}}{m_{1}^{2}m_{2}\cdots m_{d}}\right)2^{d},
\]
which completes the proof.
\end{proof}

\begin{thm}\label{48a}
Let $z=\sum_{j=1}^{\infty}\frac{a_{j}}{2^{j}}$ with $a_{j}\in\{0,1\}$ and assume that $\sum_{j=1}^{\infty}(1-a_{j})=\infty$.
Then
\[
\partial_{+}Z^{\star}(z):=\lim_{x\to z+0}\frac{Z^{\star}(z)-Z^{\star}(x)}{z-x}=\frac{1}{2}+\sum_{d=1}^{\infty}a_{d}\left(\sum_{m_{1}\geq\cdots\geq m_{d}\geq3}\frac{a_{1}^{m_{1}-m_{2}}\cdots a_{d-1}^{m_{d-1}-m_{d}}}{m_{1}^{2}m_{2}\cdots m_{d}}\right)2^{d}.
\]
Thus, $Z^{\star}$ is right-differentiable at $z$ for any $0\leq z<1$.
\end{thm}
\begin{proof}
Take $x\in(z,1)$ and put $x=\sum_{j=1}^{\infty}\frac{b_{j}}{2^{j}}$.
Let $p=p(x)$ be the minimal integer such that $(a_{p},b_{p})=(0,1)$.
Put 
\[
 y=\sum_{j=1}^{p}
 \frac{b_{j}}{2^{j}}
 =\sum_{j=1}^{p-1}\frac{b_{j}}{2^{j}}+\frac{1}{2^{p+1}}+\frac{1}{2^{p+2}}+\cdots.
\]
Furthermore, let $r$ be the maximal integer such that
\[
a_{p+1}=a_{p+2}=\cdots=a_{r}=1
\]
and
\[
b_{p+1}=b_{p+2}=\cdots=b_{r}=0.
\]
Then by Lemma \ref{46}, we have
\[
 Z^{\star}(x)-Z^{\star}(y)
 =(x-y)\left(\frac{1}{2}
 +\sum_{d=1}^{r}b_{d}\left(\sum_{m_{1}\geq\cdots\geq m_{d}\geq3}\frac{b_{1}^{m_{1}-m_{2}}\cdots b_{d-1}^{m_{d-1}-m_{d}}}{m_{1}^{2}m_{2}\cdots m_{d}}\right)2^{d}\right)+O\left(\frac{r-t}{3^{r-t}}\right)
\]
and
\[
 Z^{\star}(y)-Z^{\star}(z)
 =(y-z)\left(\frac{1}{2}+\sum_{d=1}^{r}a_{d}\left(\sum_{m_{1}\geq\cdots\geq m_{d}\geq3}\frac{a_{1}^{m_{1}-m_{2}}\cdots a_{d-1}^{m_{d-1}-m_{d}}}{m_{1}^{2}m_{2}\cdots m_{d}}\right)2^{d}\right)+O\left(\frac{r-t}{3^{r-t}}\right).
\]

From the intermediate value theorem, there exists a real number $u(x)$ between 
\[
\frac{1}{2}+\sum_{d=1}^{r}a_{d}\left(\sum_{m_{1}\geq\cdots\geq m_{d}\geq3}\frac{a_{1}^{m_{1}-m_{2}}\cdots a_{d-1}^{m_{d-1}-m_{d}}}{m_{1}^{2}m_{2}\cdots m_{d}}\right)2^{d}
\]
and
\[
\frac{1}{2}+\sum_{d=1}^{r}b_{d}\left(\sum_{m_{1}\geq\cdots\geq m_{d}\geq3}\frac{b_{1}^{m_{1}-m_{2}}\cdots b_{d-1}^{m_{d-1}-m_{d}}}{m_{1}^{2}m_{2}\cdots m_{d}}\right)2^{d}
\]
such that
\[
Z^{\star}(z)-Z^{\star}(x)
=(z-x)u(x)
 +O\left(\frac{r-t}{3^{r-t}}\right).
\]

Since
\begin{align*}
|z-x|
&\ge\frac{1}{2^p}-\frac{1}{2^{p+1}}-\cdots-\frac{1}{2^r}-\sum_{j=r+2}^{\infty}\frac{1}{2^{j}}\\
&=\frac{1}{2^{r+1}},
\end{align*}
we have
\[
 \frac{Z^{\star}(z)-Z^{\star}(x)}{z-x}
 =u(x)
  +O\left(r(2/3)^r3^t\right).
\]

By the condition $\sum_{j=1}^{\infty}(1-a_{j})=\infty$, we have $\lim_{x\to z+0}p(x)=\infty$
and thus 
\[
\lim_{x\to z+0}u(x)
=\frac{1}{2}
 +\sum_{d=1}^{\infty}a_{d}
  \left(\sum_{m_{1}\geq\cdots\geq m_{d}\geq3}\frac{a_{1}^{m_{1}-m_{2}}\cdots a_{d-1}^{m_{d-1}-m_{d}}}{m_{1}^{2}m_{2}\cdots m_{d}}\right)2^{d},
\]
which completes the proof.
\end{proof}
\begin{rem}\label{rem:left=right}
By Theorems \ref{47a} and \ref{48a}, if $z$ admits a $2$-adic expansion $\sum_{j=1}^{\infty}\frac{a_j}{2^j}$ satisfying $\sum_{j=1}^{\infty}a_j = \sum_{j=1}^{\infty}(1-a_j)=\infty$, then $\partial_{+}Z^{\star}(z)=\partial_{-}Z^{\star}(z)$. This implies Theorem \ref{main3} (4).    
\end{rem}

\begin{thm} \label{cor:ccc}
For $z=\sum_{j=1}^{r}\frac{a_{j}}{2^{j}}$ with $a_{j}\in\{0,1\}$ and $a_{r}=1$, we have
\[
\partial_{+}Z^{\star}(z)=\frac{1}{2}+\sum_{d=1}^{r}a_{d}\left(\sum_{m_{1}\geq\cdots\geq m_{d}\geq3}\frac{a_{1}^{m_{1}-m_{2}}\cdots a_{d-1}^{m_{d-1}-m_{d}}}{m_{1}^{2}m_{2}\cdots m_{d}}\right)2^{d}
\]
and
\[
\partial_{-}Z^{\star}(z)=\partial_{+}Z^{\star}(z)+\sum_{m_{1}\geq\cdots\geq m_{r}\geq3}\frac{a_{1}^{m_{1}-m_{2}}\cdots a_{r-1}^{m_{r-1}-m_{r}}}{m_{1}^{2}m_{2}\cdots m_{r}}2^{r}(m_{r}-2)\qquad(z\neq1-1/2^{r}).
\]
\end{thm}
\begin{proof}
The first statement is just a special case of Theorem \ref{48a}.
Since
\[
z=\sum_{j=1}^{r-1}\frac{a_{j}}{2^{j}}+\frac{1}{2^{r+1}}+\frac{1}{2^{r+2}}+\cdots,
\]
we have
\begin{align*}
\partial_{-}Z^{\star}(z) & =\frac{1}{2}+\sum_{d=1}^{r-1}a_{d}\left(\sum_{m_{1}\geq\cdots\geq m_{d}\geq3}\frac{a_{1}^{m_{1}-m_{2}}\cdots a_{d-1}^{m_{d-1}-m_{d}}}{m_{1}^{2}m_{2}\cdots m_{d}}\right)2^{d}\\
 & \quad+\sum_{d=r+1}^{\infty}\left(\sum_{m_{1}\geq\cdots\geq m_{r}\geq3}\frac{a_{1}^{m_{1}-m_{2}}\cdots a_{r-1}^{m_{r-1}-m_{r}}}{m_{1}^{2}m_{2}\cdots m_{r}}\sum_{m_{r}=m_{r+1}\geq\cdots\geq m_{d}\geq3}\frac{1}{m_{r+1}\cdots m_{d}}\right)2^{d}
\end{align*}
by Lemma \ref{47a}.
Since
\begin{align*}
\frac{2^{r+1}}{m_{r}}\prod_{n=3}^{m_{r}}\sum_{c=0}^{\infty}\left(\frac{2}{n}\right)^{c}
=\frac{2^{r+1}}{m_{r}}\prod_{n=3}^{m_{r}}\frac{n}{n-2}
=2^{r}(m_{r}-1),
\end{align*}
the third term equals
\begin{align*}
 &\sum_{p=0}^{\infty}\left(\sum_{m_{1}\geq\cdots\geq m_{r}\geq3}\frac{a_{1}^{m_{1}-m_{2}}\cdots a_{r-1}^{m_{r-1}-m_{r}}}{m_{1}^{2}m_{2}\cdots m_{r}}\frac{1}{m_{r}}\sum_{m_{r}\geq n_{1}\geq\cdots\geq m_{p}\geq3}\frac{1}{n_{1}\cdots n_{p}}\right)2^{p+r+1}\qquad(p=d-r-1)\\
 & =\sum_{m_{1}\geq\cdots\geq m_{r}\geq3}\frac{a_{1}^{m_{1}-m_{2}}\cdots a_{r-1}^{m_{r-1}-m_{r}}}{m_{1}^{2}m_{2}\cdots m_{r}}\cdot
 \frac{2^{r+1}}{m_{r}}\prod_{n=3}^{m_{r}}\sum_{c=0}^{\infty}\left(\frac{2}{n}\right)^{c}\\
 & =\sum_{d=r}^{r}a_{d}\left(\sum_{m_{1}\geq\cdots\geq m_{d}\geq3}\frac{a_{1}^{m_{1}-m_{2}}\cdots a_{d-1}^{m_{d-1}-m_{d}}}{m_{1}^{2}m_{2}\cdots m_{d}}\right)2^{d}
 +\sum_{m_{1}\geq\cdots\geq m_{r}\geq3}\frac{a_{1}^{m_{1}-m_{2}}\cdots a_{r-1}^{m_{r-1}-m_{r}}}{m_{1}^{2}m_{2}\cdots m_{r}}2^{r}(m_{r}-2).
\end{align*}
Thus the theorem is proved.
\end{proof}

\section{Divergence of left-differential}
In this section, we give a proof of Theorem \ref{main3} (3).
\begin{lem} \label{61}
Fix $r\geq1$. Then we have
\[
\sum_{m_{1}\geq\cdots\geq m_{s}\geq n}\frac{1}{m_{1}^{r+1}m_{2}\cdots m_{s}}\leq\frac{1}{(n-1)\cdots(n-r)r^{s}}
\]
for all $s\geq1$ and $n\geq r+1$. Furthermore, there exists $C_{r}\in\mathbb{R}_{>0}$
such that
\[
\sum_{m_{1}\geq\cdots\geq m_{s}\geq n}\frac{1}{m_{1}^{r+1}m_{2}\cdots m_{s}}
\ge
\frac{1}{(n-1)\cdots(n-r)r^{s}}
-\frac{C_{r}}{(n-1)\cdots(n-r)(n-r-1)(r+1)^{s}}
\]
for all $s\geq1$ and $n\geq r+2$.
\end{lem}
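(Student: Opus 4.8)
The plan is to sandwich the sum $S_s(n)\coloneqq\sum_{m_1\ge\cdots\ge m_s\ge n}\frac{1}{m_1^{r+1}m_2\cdots m_s}$ between two sums built from products of consecutive integers, which telescope exactly, in the same spirit as Lemmas \ref{aaaaa} and \ref{cc41}. For $n\ge r+1$ introduce
\[
U_s(n)\coloneqq\sum_{m_1\ge\cdots\ge m_s\ge n}\frac{1}{m_1(m_1-1)\cdots(m_1-r)\,m_2m_3\cdots m_s},
\]
where the largest variable carries the product of the $r+1$ consecutive integers $m_1,m_1-1,\dots,m_1-r$ (and $m_2\cdots m_s$ is the empty product when $s=1$). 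Since $m^{r+1}\ge m(m-1)\cdots(m-r)$ for $m\ge r+1$, we have $S_s(n)\le U_s(n)$ termwise, so the first inequality reduces to the identity $U_s(n)=\frac{1}{r^s(n-1)\cdots(n-r)}$.

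To prove that identity I would first record the telescoping formula
\[
\sum_{m=n}^{\infty}\frac{1}{m(m-1)\cdots(m-r)}=\frac1r\cdot\frac{1}{(n-1)(n-2)\cdots(n-r)}\qquad(n\ge r+1),
\]
which follows from $\frac{1}{(m-1)\cdots(m-r)}-\frac{1}{m(m-1)\cdots(m-r+1)}=\frac{r}{m(m-1)\cdots(m-r)}$. Carrying out the sum over the largest variable $m_1\in[m_2,\infty)$ inside $U_s(n)$ and then merging the stray factor $m_2$ with the resulting product $(m_2-1)\cdots(m_2-r)$ into $m_2(m_2-1)\cdots(m_2-r)$ yields the recursion $U_s(n)=\frac1r U_{s-1}(n)$; together with $U_1(n)=\frac{1}{r(n-1)\cdots(n-r)}$, iterating gives the claimed value of $U_s(n)$, hence the first inequality.

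For the second inequality I would bound $U_s(n)-S_s(n)=\sum_{m_1\ge\cdots\ge m_s\ge n}\bigl(\frac{1}{m_1(m_1-1)\cdots(m_1-r)}-\frac{1}{m_1^{r+1}}\bigr)\frac{1}{m_2\cdots m_s}$ from above. The polynomial $p(m)\coloneqq m^{r+1}-m(m-1)\cdots(m-r)$ has degree at most $r$, since the top-degree terms cancel, so $p(m)\le C_r m^r$ for all $m\ge1$ with $C_r>0$ the sum of the absolute values of its coefficients; hence for $m\ge r+2$,
\[
\frac{1}{m(m-1)\cdots(m-r)}-\frac{1}{m^{r+1}}=\frac{p(m)}{m^{r+1}\,m(m-1)\cdots(m-r)}\le\frac{C_r}{m^2(m-1)\cdots(m-r)}\le\frac{C_r}{m(m-1)\cdots(m-r-1)},
\]
the last step because $m\ge m-r-1$. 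Substituting this bound and applying the same telescoping argument with $r$ replaced by $r+1$ to
\[
V_s(n)\coloneqq\sum_{m_1\ge\cdots\ge m_s\ge n}\frac{1}{m_1(m_1-1)\cdots(m_1-r-1)\,m_2\cdots m_s}=\frac{1}{(r+1)^s(n-1)(n-2)\cdots(n-r-1)}\qquad(n\ge r+2)
\]
gives $U_s(n)-S_s(n)\le C_r V_s(n)$. Since $(n-1)\cdots(n-r-1)=(n-1)\cdots(n-r)(n-r-1)$, combining this with $S_s(n)=U_s(n)-\bigl(U_s(n)-S_s(n)\bigr)$ and the value of $U_s(n)$ produces exactly the stated lower bound, the hypothesis $n\ge r+2$ being precisely what the $V_s$-telescoping needs.

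The genuinely routine pieces are the two telescoping identities and the degree count for $p(m)$; the main thing to watch is the index bookkeeping in the recursions $U_s\mapsto U_{s-1}$ and $V_s\mapsto V_{s-1}$, namely checking that after summing out the largest variable the truncated falling factorial recombines with the leftover single factor to reproduce exactly the shape of the next sum in the chain. I do not anticipate a real obstacle beyond this.
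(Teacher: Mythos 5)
Your proposal is correct and follows essentially the same route as the paper: both compare $\frac{1}{m_1^{r+1}}$ with the falling-factorial kernels $\frac{1}{m_1(m_1-1)\cdots(m_1-r)}$ and $\frac{1}{m_1(m_1-1)\cdots(m_1-r-1)}$ and evaluate the resulting sums exactly via the telescoping identity $\sum_{m\geq n}\frac{1}{m(m-1)\cdots(m-r)}=\frac{1}{r(n-1)\cdots(n-r)}$, iterated over the variables. The only difference is that you explicitly justify the existence of $C_r$ by the degree count for $p(m)=m^{r+1}-m(m-1)\cdots(m-r)$, a pointwise inequality the paper merely asserts.
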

\begin{proof}
The first claim follows from
\begin{align*}
 & \sum_{m_{1}\geq\cdots\geq m_{s}\geq n}\frac{1}{m_{1}^{r+1}m_{2}\cdots m_{s}}\\
 & \leq\sum_{m_{1}\geq\cdots\geq m_{s}\geq n}\frac{1}{m_{1}(m_{1}-1)\cdots(m_{1}-r)m_{2}\cdots m_{s}}\\
 &=\sum_{m_{1}\geq\cdots\geq m_{s}\geq n}
  \frac{1}{r}\left(
   \frac{1}{(m_{1}-1)\cdots(m_{1}-r)}
   -\frac{1}{m_{1}(m_{1}-1)\cdots(m_{1}-r+1)}
  \right)
  \frac{1}{m_{2}\cdots m_{s}}\\
 &=\sum_{m_{2}\geq\cdots\geq m_{s}\geq n}
  \frac{1}{r}
  \cdot\frac{1}{(m_{2}-1)\cdots(m_{2}-r)}
  \cdot\frac{1}{m_{2}\cdots m_{s}}\\
 &=\cdots\\
 &=\frac{1}{(n-1)\cdots(n-r)r^{s}}.
\end{align*}
There exists $C_{r}\in\mathbb{R}_{>0}$ such that
\[
\frac{1}{m^{r+1}}\geq\frac{1}{m(m-1)\cdots(m-r)}-\frac{C_{r}}{m(m-1)\cdots(m-r-1)}
\]
for all $m\geq r+2$. Then, the second claim follows from
\begin{align*}
 & \sum_{m_{1}\geq\cdots\geq m_{s}\geq n}\frac{1}{m_{1}^{r+1}m_{2}\cdots m_{s}}\\
 & \geq\sum_{m_{1}\geq\cdots\geq m_{s}\geq n}\frac{1}{m_{1}(m_{1}-1)\cdots(m_{1}-r)m_{2}\cdots m_{s}}\\
 &\quad -C_{r}\sum_{m_{1}\geq\cdots\geq m_{s}\geq n}\frac{1}{m_{1}(m_{1}-1)\cdots(m_{1}-r-1)m_{2}\cdots m_{s}}\\
 & =\frac{1}{(n-1)\cdots(n-r)r^{s}}-\frac{C_{r}}{(n-1)\cdots(n-r)(n-r-1)(r+1)^{s}}. \qedhere
\end{align*}
\end{proof}

\begin{lem} \label{62}
Fix $r\geq 1$. Then there exists $D_{r}\in\mathbb{R}_{>0}$ such that
\[
\frac{1}{r!r^{s}}-\frac{sD_{r}}{(r+1)^{s}}\leq\sum_{m_{1}\geq\cdots\geq m_{s}\geq r+1}\frac{1}{m_{1}^{r+1}m_{2}\cdots m_{s}}\leq\frac{1}{r!r^{s}}
\]
for all $s\geq 1$.
\end{lem}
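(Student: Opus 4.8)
The plan is to deduce both inequalities from Lemma \ref{61} by peeling off a single layer of summation. Write
\[
S_s:=\sum_{m_{1}\geq\cdots\geq m_{s}\geq r+1}\frac{1}{m_{1}^{r+1}m_{2}\cdots m_{s}},\qquad
S_s':=\sum_{m_{1}\geq\cdots\geq m_{s}\geq r+2}\frac{1}{m_{1}^{r+1}m_{2}\cdots m_{s}}.
\]
The upper bound $S_s\le\frac{1}{r!\,r^{s}}$ is immediate from the first inequality of Lemma \ref{61} with $n=r+1$, since there $(n-1)\cdots(n-r)=r!$. So it remains to bound $E_s:=\frac{1}{r!\,r^{s}}-S_s$ from above.

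First I would record the recursion
\[
S_s=S_s'+\frac{1}{r+1}\,S_{s-1}\qquad(s\ge2),
\]
obtained by splitting the innermost index on whether $m_s=r+1$ (contributing $\frac{1}{r+1}S_{s-1}$, using that $m_s$ carries exponent $1$ once $s\ge2$) or $m_s\ge r+2$ (contributing $S_s'$, since then every $m_i\ge r+2$). Next, Lemma \ref{61} applied with $n=r+2$ gives $(n-1)\cdots(n-r)=(r+1)!$ and $(n-1)\cdots(n-r)(n-r-1)=(r+1)!$, hence
\[
S_s'\ \ge\ \frac{1}{(r+1)!\,r^{s}}-\frac{C_{r}}{(r+1)!\,(r+1)^{s}}\qquad(s\ge1).
\]
Substituting into the recursion and using $\frac{1}{r!\,r^{s}}=\frac{r+1}{(r+1)!\,r^{s}}$, the main terms cancel and one is left with
\[
E_s\ \le\ \frac{E_{s-1}}{r+1}+\frac{C_{r}}{(r+1)!\,(r+1)^{s}},\qquad\text{equivalently}\qquad (r+1)^{s}E_s\ \le\ (r+1)^{s-1}E_{s-1}+\frac{C_{r}}{(r+1)!}\qquad(s\ge2).
\]

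Iterating the last inequality from $s$ down to $1$ yields $(r+1)^{s}E_s\le (r+1)E_1+(s-1)\frac{C_{r}}{(r+1)!}$ for all $s\ge1$, so setting
\[
D_r:=\max\!\left\{(r+1)E_1,\ \frac{C_{r}}{(r+1)!}\right\}>0
\]
(positivity is clear since $C_r>0$) gives $(r+1)^{s}E_s\le sD_r$, which rearranges to $S_s\ge\frac{1}{r!\,r^{s}}-\frac{sD_r}{(r+1)^{s}}$, as desired. The only delicate point is that Lemma \ref{61}'s lower bound is available only for $n\ge r+2$, which is precisely why one must split off the $m_s=r+1$ layer before invoking it; a minor related subtlety is that the recursion $S_s=S_s'+\frac{1}{r+1}S_{s-1}$ holds only for $s\ge2$ (for $s=1$ the single index $m_1$ carries the exponent $r+1$), so $s=1$ is treated as a base case and absorbed into the definition of $D_r$.
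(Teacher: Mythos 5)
Your argument is correct and is essentially the paper's: both split the sum over $m_{1}\geq\cdots\geq m_{s}\geq r+1$ according to the trailing block of indices equal to $r+1$ and apply Lemma \ref{61}'s lower bound at $n=r+2$ to the part with indices $\geq r+2$. You package this as a one-step recursion $S_s=S_s'+\frac{1}{r+1}S_{s-1}$ that you iterate, whereas the paper unrolls the same decomposition all at once as a sum over $i$ and evaluates the resulting geometric-type series explicitly; the content is identical.
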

\begin{proof}
We have
\begin{align*}
 \sum_{m_{1}\geq\cdots\geq m_{s}\geq r+1}\frac{1}{m_{1}^{r+1}m_{2}\cdots m_{s}}
 & =\sum_{i=0}^{s}\sum_{\substack{m_{1}\geq\cdots\geq m_{i}\geq r+2\\
m_{i+1}=\cdots=m_{s}=r+1
}
}\frac{1}{m_{1}^{r+1}m_{2}\cdots m_{s}}\\
 & \geq\sum_{i=1}^{s}\frac{1}{(r+1)^{s-i}}\sum_{m_{1}\geq\cdots\geq m_{i}\geq r+2}\frac{1}{m_{1}^{r+1}m_{2}\cdots m_{i}},
\end{align*}
and by the second claim of the previous lemma,
\begin{align*}
 & \geq\sum_{i=1}^{s}\frac{1}{(r+1)^{s-i}}\left(\frac{1}{(r+1)!r^{i}}-\frac{C_{r}}{(r+1)!(r+1)^{i}}\right)\\
 & =\frac{1}{r!r^{s}}-\frac{1}{(r+1)!(r+1)^{s-1}}-\frac{sC_{r}}{(r+1)!(r+1)^{s}}
\end{align*}
This proves the lower bound for the inequality, and the upper bound follows from the first claim of the previous lemma.
\end{proof}
\begin{lem} \label{63}
Fix $r\geq 1$. Then there exists $E_{r}\in\mathbb{R}_{>0}$ such that
\begin{align*}
 \frac{s-E_{r}}{r!r^{s}}
 &\leq\sum_{m_{1}\geq\cdots\geq m_{s}\geq r}
 \frac{1}{m_{1}^{r+1}m_{2}\cdots m_{s}}
 -\frac{1}{r^{r+s}}
 \leq\frac{s}{r!r^{s}}
\end{align*}
for all $s\geq 1$.
\end{lem}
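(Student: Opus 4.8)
The plan is to peel off the innermost indices sitting at the lower endpoint $r$ and thereby reduce the sum to the quantity already controlled in Lemma \ref{62}. Writing $i$ for the number of indices with $m_j>r$, monotonicity forces $m_1\ge\cdots\ge m_i\ge r+1$ and $m_{i+1}=\cdots=m_s=r$, so
\[
\sum_{m_1\ge\cdots\ge m_s\ge r}\frac{1}{m_1^{r+1}m_2\cdots m_s}
=\frac{1}{r^{r+s}}+\sum_{i=1}^{s}\frac{U_i}{r^{s-i}},
\qquad
U_i:=\sum_{m_1\ge\cdots\ge m_i\ge r+1}\frac{1}{m_1^{r+1}m_2\cdots m_i},
\]
and $U_i$ is exactly the sum bounded above and below in Lemma \ref{62}. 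All the work then goes into summing the two one-sided estimates for $U_i$ against the weights $r^{\,i-s}$.

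For the lower bound I would discard the (positive) endpoint term $r^{-(r+s)}$ and insert the lower estimate $U_i\ge\frac{1}{r!\,r^{i}}-\frac{iD_r}{(r+1)^i}$ of Lemma \ref{62}, giving
\[
\sum_{m_1\ge\cdots\ge m_s\ge r}\frac{1}{m_1^{r+1}m_2\cdots m_s}
\ge\frac{s}{r!\,r^{s}}-\frac{D_r}{r^{s}}\sum_{i=1}^{s} i\Bigl(\frac{r}{r+1}\Bigr)^{i}
\ge\frac{s}{r!\,r^{s}}-\frac{D_r\,r(r+1)}{r^{s}},
\]
where the last step uses $\sum_{i\ge1} ix^i=x/(1-x)^2$ at $x=r/(r+1)$. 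Multiplying numerator and denominator by $r!$ exhibits this as $\ge\frac{s-E_r}{r!\,r^{s}}$ with $E_r:=D_r\,r(r+1)\,r!$ (any larger constant works as well).

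For the upper bound, quoting $U_i\le\frac{1}{r!\,r^{i}}$ termwise only yields $\frac{s}{r!\,r^{s}}+\frac{1}{r^{r+s}}$, which overshoots; the trick is to absorb the boundary term $r^{-(r+s)}$ into the $i=1$ block. Since
\[
\frac{1}{r^{r+s}}+\frac{U_1}{r^{s-1}}
=\frac{1}{r^{s-1}}\sum_{m\ge r}\frac{1}{m^{r+1}}
\le\frac{1}{r^{s-1}}\cdot\frac{1}{r\cdot r!}=\frac{1}{r!\,r^{s}},
\]
where $\sum_{m\ge r}m^{-(r+1)}\le 2r^{-(r+1)}\le\frac{1}{r\cdot r!}$ follows from comparison with $\int_r^{\infty}x^{-(r+1)}\,dx$ together with the elementary inequality $2\,r!\le r^{r}$ (valid for $r\ge2$, with equality at $r=2$), while $U_i/r^{s-i}\le\frac{1}{r!\,r^{s}}$ for $i\ge 2$ by Lemma \ref{62}, summing the $s$ contributions (the merged $i\in\{0,1\}$ block together with $i=2,\dots,s$) gives exactly $\frac{s}{r!\,r^{s}}$.

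The combinatorial splitting and the geometric-series bookkeeping are routine; the one delicate point — the main obstacle — is getting the upper constant to be exactly $s$ rather than $s+O(1)$, which forces the merger of the boundary term with the $i=1$ term and the use of the sharp bound $\sum_{m\ge r}m^{-(r+1)}\le\frac{1}{r\cdot r!}$ (equivalently $2\,r!\le r^{r}$) in place of a direct appeal to Lemma \ref{62}.
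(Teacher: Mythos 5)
Your decomposition according to the number $i$ of indices exceeding $r$, and the reduction to the quantity controlled in Lemma \ref{62}, is exactly the route the paper takes, and your lower bound (discard the positive boundary term $r^{-(r+s)}$, sum the errors $iD_r(r+1)^{-i}$ against the weights $r^{i-s}$ via $\sum_{i\ge1} ix^i=x/(1-x)^2$ at $x=r/(r+1)$) reproduces the paper's computation with $E_r=D_r\,r(r+1)\,r!$. Where you genuinely differ is the upper bound. The paper invokes Lemma \ref{62} termwise and concludes; as you correctly observe, that only yields $\frac{s}{r!r^s}+\frac{1}{r^{r+s}}$, so the paper's own argument for the upper bound is slightly short of the stated constant. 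Your merger of the boundary term with the $i=1$ block, via $\frac{1}{r^{r+1}}+U_1=\sum_{m\ge r}m^{-(r+1)}\le 2r^{-(r+1)}\le\frac{1}{r\cdot r!}$, genuinely repairs this and recovers the exact bound $\frac{s}{r!r^s}$. One caveat you half-acknowledge: the inequality $2\,r!\le r^r$ forces $r\ge2$, and this restriction is not an artifact of your method --- for $r=1$, $s=1$ the sum equals $\zeta(2)>1=\frac{s}{r!r^s}$, so the stated upper bound is simply false at $r=1$. Since the lemma is only ever applied with $r=2$ (in Theorem \ref{thm:ddd}), nothing is lost, but the hypothesis $r\ge2$ should be made explicit.
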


\begin{proof}
We have
\begin{align*}
 \sum_{m_{1}\geq\cdots\geq m_{s}\geq r}\frac{1}{m_{1}^{r+1}m_{2}\cdots m_{s}}
 & =\frac{1}{r^{r+s}}+\sum_{i=1}^{s}\sum_{\substack{m_{1}\geq\cdots\geq m_{i}\geq r+1\\
m_{i+1}=\cdots=m_{s}=r
}
}\frac{1}{m_{1}^{r+1}m_{2}\cdots m_{s}}\\
 & =\frac{1}{r^{r+s}}+\sum_{i=1}^{s}\frac{1}{r^{s-i}}\sum_{m_{1}\geq\cdots\geq m_{i}\geq r+1}\frac{1}{m_{1}^{r+1}m_{2}\cdots m_{i}}.
\end{align*}
Here, by the previous lemma,
\[
\frac{1}{r!r^{i}}-\frac{iD_{r}}{(r+1)^{i}}\leq\sum_{m_{1}\geq\cdots\geq m_{i}\geq r+1}\frac{1}{m_{1}^{r+1}m_{2}\cdots m_{i}}\leq\frac{1}{r!r^{i}}.
\]
Furthermore,
\[
\sum_{i=1}^{s}\frac{1}{r^{s-i}}\left(\frac{1}{r!r^{i}}\right)=\frac{s}{r!r^{s}}
\]
and
\begin{align*}
\sum_{i=1}^{s}\frac{1}{r^{s-i}}\left(\frac{iD_{r}}{(r+1)^{i}}\right) & =D_{r}r\left(r^{1-s}+r^{-s}-r(r+1)^{-s}-(s+1)(r+1)^{-s}\right)\\
 & \leq D_{r}r\left(\frac{r+1}{r^{s}}\right).
\end{align*}
Putting $E_r=D_r r(r+1)r!$, we obtain the lemma.
\end{proof}
By Lemmas \ref{61} and \ref{63}, we have
\[
\sum_{m_{1}\geq\cdots\geq m_{s}\geq n}\frac{1}{m_{1}^{r+1}m_{2}\cdots m_{s}}\asymp_{r}\begin{cases}
\frac{1}{(n-1)\cdots(n-r)r^{s}} & n>r\\
\frac{s}{r^{s}} & n=r.
\end{cases}
\]
Thus we have the following:
\begin{lem} \label{eqeq}
Fix $r\geq 1$.
We have
\begin{align*} 
 \sum_{n=r}^{\infty}\sum_{m_{1}\geq\cdots\geq m_{s}\geq n}
 \frac{1}{m_{1}^{r+1}m_{2}\cdots m_{s}}
 \asymp\frac{s}{r^{s}} 
\end{align*} 
for all $s\ge1$.
\end{lem}

\begin{thm} \label{thm:ddd}
Fix $p>0$ and put $z=1-\frac{1}{2^{p}}$. Then for $h=\frac{1}{2^{q}}$ with $q>p$,
we have
\begin{align*}
\frac{Z^{\star}(z)-Z^{\star}(z-h)}{h} & \asymp q.
\end{align*}
Thus, $Z^{\star}$ is not left-differentiable at $1-\frac{1}{2^p}$.
\end{thm}

\begin{proof}
We only consider the case $q>p+1$.
Note that
\[
 z=\sum_{\substack{k=1\\k\neq p}}^{\infty}\frac{1}{2^{k}}
 \quad\text{ and }\quad
 z-h=\sum_{\substack{k=1\\k\neq p,q}}^{\infty}\frac{1}{2^{k}}.
\]
By definition, we have
\[
Z^{\star}(z)=\sum_{\substack{m_{1}\geq m_{2}\geq\cdots\geq1\\
m_{p}=m_{p+1}
}
}\frac{1}{m_{1}^{2}m_{2}m_{3}\cdots}
\]
and
\[
Z^{\star}(z-h)=\sum_{\substack{m_{1}\geq m_{2}\geq\cdots\geq1\\
m_{p}=m_{p+1},m_{q}=m_{q+1}
}
}\frac{1}{m_{1}^{2}m_{2}m_{3}\cdots}.
\]
Then we have
\begin{align*}
 Z^{\star}(z)-Z^{\star}(z-h) 
 &=\sum_{\substack{m_{1}\geq m_{2}\geq\cdots\geq1\\
m_{p}=m_{p+1},m_{q}>m_{q+1}
}
}\frac{1}{m_{1}^{2}m_{2}m_{3}\cdots}\\
& \asymp\sum_{\substack{m_{1}\geq m_{2}\geq\cdots\geq1\\
m_{p}=m_{p+1},m_{q}>m_{q+1}
}
}\frac{1}{m_{1}(m_{1}-1)m_{2}m_{3}\cdots}\\
 &=\sum_{\substack{m_{p}\geq m_{p+1}\geq\cdots\geq1\\
m_{p}=m_{p+1},m_{q}>m_{q+1}
}
}\frac{1}{m_{p}(m_{p}-1)m_{p+1}m_{p+2}\cdots}\\
 &=\sum_{\substack{m_{p+1}\geq\cdots\geq1\\
m_{q}>m_{q+1}
}
}\frac{1}{m_{p+1}^{2}(m_{p+1}-1)m_{p+2}m_{p+3}\cdots}.
\end{align*}
Similar to the proof of Lemma \ref{lem:linf}, we have
\begin{align*}
 &Z^{\star}(z)-Z^{\star}(z-h) \\
 &\asymp\sum_{m_{p+1}\geq\cdots\geq m_{q}\geq2}\frac{1}{m_{p+1}^{2}(m_{p+1}-1)m_{p+2}m_{p+3}\cdots m_{q}}\sum_{m_{q}>m_{q+1}\geq m_{q+2}\cdots}\frac{1}{m_{q+1}m_{q+2}\cdots}\\
 &=\sum_{m_{p+1}\geq\cdots\geq m_{q}\geq2}\frac{1}{m_{p+1}^{2}(m_{p+1}-1)m_{p+2}m_{p+3}\cdots m_{q}}\prod_{m=2}^{m_{q}-1}\frac{m}{m-1}\\
 &=\sum_{m_{p+1}\geq\cdots\geq m_{q}\geq2}\frac{m_{q}-1}{m_{p+1}^{2}(m_{p+1}-1)m_{p+2}m_{p+3}\cdots m_{q}}\\
 &\asymp\sum_{m_{p+1}\geq\cdots\geq m_{q}\geq2}\frac{1}{m_{p+1}^{3}m_{p+2}m_{p+3}\cdots m_{q-1}}.
\end{align*}
Hence we get
\begin{align*}
 Z^{\star}(z)-Z^{\star}(z-h) 
 &\asymp\sum_{n=2}^{\infty}\sum_{m_{p+1}\geq\cdots\geq m_{q-1}\geq n}\frac{1}{m_{p+1}^{3}m_{p+2}m_{p+3}\cdots m_{q-1}}\\
 &=\sum_{n=2}^{\infty}\sum_{m_{1}\geq\cdots\geq m_{s}\geq n}\frac{1}{m_{1}^{3}m_{2}m_{3}\cdots m_{s}},
\end{align*}
where $s=q-p-1>0.$
By Lemma \ref{eqeq}, we find
\begin{align*}
 Z^{\star}(z)-Z^{\star}(z-h) 
 &\asymp \frac{s}{2^{s}} \asymp qh,
\end{align*}
which completes the proof.
\end{proof}

\section*{Acknowledgements}
This work was supported by JSPS KAKENHI Grant Numbers JP18K13392, JP19K14511, JP22K03244, and JP22K13897.


\end{document}